\newtheorem{theorem}{Theorem}[section]
\newtheorem{lem}[theorem]{Lemma} 
\newtheorem{prop}[theorem]{Proposition}
\newtheorem{coro}[theorem]{Corollary} 
\theoremstyle{definition}
\newtheorem{rem}[theorem]{Remark}
\newcommand{\Z}{\mathbb Z}
\newcommand{\R}{\mathbb R}
\newcommand{\ep}{\epsilon}
\newcommand{\Ntext}{ \text{NLS}_k}
\newcommand{\Nemph}{ \emph{NLS}_k}
\newcommand{\re}[1]{\mbox{Re} \ #1} 
\newcommand{\scal}[1]{\left\langle #1 \right\rangle} 
\newcommand{\defendproof}{\hfill $\Box$} 
\begin{document}
\title{\sc Global well-posedness for a $L^2$-critical nonlinear higher-order Schr\"odinger equation}
\author{\sc{Van Duong Dinh}} 
\date{ }
\maketitle

\begin{abstract}
We prove the global well-posedness for a $L^2$-critical defocusing cubic higher-order Schr\"odinger equation, namely
\[
i\partial_t u + \Lambda^k u = -|u|^2 u,
\]
where $\Lambda=\sqrt{-\Delta}$ and $k\geq 3, k \in \Z$ in $\R^k$ with initial data $u_0 \in H^\gamma, \gamma>\gamma(k):=\frac{k(4k-1)}{14k-3}$. 
\end{abstract}


\section{Introduction and main results}
\setcounter{equation}{0}
Let $k \geq 2, k \in \Z$. We consider the Cauchy problem for the defocusing cubic nonlinear higher-order Schr\"odinger equation posed on $\R^k$, namely
\[
\left\{
\begin{array}{rcl}
i\partial_t u(t,x) + \Lambda^k u(t,x)&=& -|u(t,x)|^2 u(t,x), \quad t\geq 0, x \in \R^k, \\
u(0,x) &=& u_0(x) \in H^\gamma(\R^k),
\end{array}
\right.
\tag{$\Ntext$}
\]
where $\Lambda=\sqrt{-\Delta}$ is the Fourier multiplier by $|\xi|$. When $k=2$, ($\Ntext$) corresponds to the well-known Schr\"odinger equation (see e.g. \cite{Bourgain-98}, \cite{CazenaveWeissler}, \cite{Cazenave}, \cite{Kato95}, \cite{Tao}, \cite{CoKeStaTaTao-almost-conservation}, \cite{CoKeStaTaTao-resonant}, \cite{CollianderGrillakisTzirakis}, \cite{CollianderRoy}, \cite{Dodson}, \cite{Dodson-scattering} and references therein). When $k=4$, it is the fourth-order Schr\"odinger equation take into consideration the role of small fourth-order dispersion in the propagation of intense laser beams in a bulk medium with Kerr nonlinearity (see e.g. \cite{HaoHsiaoWang06}, \cite{HaoHsiaoWang07}, \cite{Pausader}, \cite{Pausadercubic}).\newline
\indent It is worth noticing that the ($\Ntext$) is $L^2$-critical in the sense that if $u$ is a solution to ($\Ntext$) on $(-T, T)$ with initial data $u_0$, then 
\begin{align}
u_\lambda(t,x)= \lambda^{-k/2} u( \lambda^{-k} t, \lambda^{-1} x), \label{scaling}
\end{align}
is also a solution of ($\Ntext$) on $(-\lambda^k T, \lambda^k T)$ with initial data $u_\lambda(0)$ and 
\[
\|u_\lambda(0)\|_{L^2(\R^k)}=\|u_0\|_{L^2(\R^k)}.
\] 
It is known (see e.g. \cite{Cazenave}, \cite{Dinh}, \cite{Dinh-fourth-order}) that ($\Ntext$) is locally well-posed in $H^\gamma(\R^k)$ when $\gamma>0$. Moreover, these local solutions enjoy mass conservation,i.e.
\begin{align}
\|u(t)\|_{L^2(\R^k)} = \|u_0\|_{L^2(\R^k)}, \label{mass conservation}
\end{align}
and $H^{k/2}$ solutions have the conserved energy,i.e. 
\begin{align}
E(u(t)):= \int_{\R^k}  \frac{1}{2} |\Lambda^{k/2} u(t,x)|^2 +\frac{1}{4} |u(t,x)|^4 dx = E(u_0). \label{energy conservation}
\end{align}
The conservations of mass and energy combine with the persistence of regularity (see e.g. \cite{Dinh-fourth-order}) immediately yield the global well-posedness for ($\Ntext$) with initial data in $H^\gamma(\R^k)$ when $\gamma \geq k/2$. Note also (see \cite{Dinh}) that one has the local well-posedness for ($\Ntext$) when initial data $ u_0 \in L^2(\R^k)$ but the time of existence depends not only on the size but also on the profile of the initial data. In addition, if $\|u_0\|_{L^2(\R^k)}$ is small enough, then ($\Ntext$) is global well-posed and scattering in $L^2(\R^k)$. It is conjectured that ($\Ntext$) is in fact globally well-posed for initial data in $H^\gamma(\R^k)$ with $\gamma\geq 0$. This paper concerns with the global well-posedness of ($\Ntext$) in $H^\gamma(\R^k)$ when $0<\gamma <k/2$. Let us recall known results for the defocusing cubic Schr\"odinger equation in $\R^2$,i.e. ($\text{NLS}_{2}$). The first attempt to this problem due to  Bourgain in \cite{Bourgain-98} where he used a ``Fourier truncation'' approach to prove the global existence for $\gamma>3/5$. It was then improved for $\gamma>4/7$ by I-team in \cite{CoKeStaTaTao-almost-conservation}. The proof is based on the almost conservation of a modified energy functional. The idea is to replace the conserved energy $E(u)$, which is not available when $\gamma<1$, by an ``almost conserved'' quantity $E(I_Nu)$ with $N\gg 1$ where $I_N$ is a smoothing operator which behaves like the identity for low frequencies $|\xi|\leq N$ and like a fractional integral operator of order $1-\gamma$ for high frequencies $|\xi|\geq 2N$. Since $I_Nu$ is not a solution to ($\text{NLS}_{2}$), we may expect an energy increment. The key idea is to show that on the time interval of local existence, the increment of the modified energy $E(I_Nu)$ decays with respect to a large parameter $N$. This allows to control $E(I_Nu)$ on time interval where the local solution exists, and we can iterate this estimate to obtain a global in time control of the solution by means of the bootstrap argument. Fang-Grillakis then upgraded this result to $\gamma\geq 1/2$ in \cite{FangGrillakis}. Later, Colliander-Grillakis-Tzirakis improved for $\gamma>2/5$ in \cite{CollianderGrillakisTzirakis} using an almost interaction Morawetz inequality. Subsequent paper \cite{CollianderRoy} has decreased the necessary regularity to $\gamma>1/3$. Afterwards, Dodson established in \cite{Dodson} the global existence for ($\text{NLS}_2$) when $\gamma>1/4$. The proof combines the almost conservation law and an improved interaction Morawetz estimate. Recently, Dodson in \cite{Dodson-scattering} proved the global well-posedness and scattering for ($\text{NLS}_2$) for initial data $u_0 \in L^2(\R^2)$ using the bilinear estimate and a frequency localized interaction Morawetz estimate. We next recall some known results about the global well-posedness below energy space for the fourth-order Schr\"odinger equation. In \cite{Guo}, the author considered the more general fourth-order Schr\"odinger equation, namely
\[
i\partial_t u + \lambda \Delta u + \mu \Delta^2 u + \nu |u|^{2m}u =0,
\]
and established the global well-posedness in $H^\gamma(\R^n)$ for $\gamma> 1+\frac{mn-9+\sqrt{(4m-mn+7)^2+16}}{4m}$ under the assumption $4<mn<4m+2$ and of course some conditions on $\lambda, \mu$ and $\nu$. For the mass-critical fourth-order Schr\"odinger equation in high dimensions $n\geq 5$, Pausader-Shao proved in \cite{PausaderShao} that the $L^2$-solution is global and scattering under some conditions. Recently, Miao-Wu-Zhang in \cite{MiaoWuZhang} showed the global existence and scattering below energy space for the defocusing cubic fourth-order Schr\"odinger equation in $\R^n$ with $n=5,6,7$. To our knowledge, there is no result concerning the global existence (possibly scattering) for ($\text{NLS}_4$). \newline
\indent The purpose of this paper is to prove the global existence of ($\Ntext$) with $k\geq 3, k \in \Z$ below the energy space $H^{k/2}(\R^k)$. 
\begin{theorem}\label{theorem global existence}
Let $k \geq 3, k \in \Z$. The initial value problem $(\Nemph)$ is globally well-posed in $H^\gamma (\R^k)$ for any $k/2>\gamma>\gamma(k):=\frac{k(4k-1)}{14k-3}$. Moreover, the solution satisfies 
\[
\|u(T)\|_{H^\gamma(\R^k)} \leq C (1+T)^{ \frac{(4k-1)(k-2\gamma)}{2((14k-3)\gamma-k(4k-1))} + },
\] 
for $|T| \rightarrow \infty$, where the constant $C$ depends only on $\|u_0\|_{H^\gamma(\R^k)}$.
\end{theorem}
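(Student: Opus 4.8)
The plan is to follow the I-method (almost conservation law) strategy, which the introduction explicitly signals as the relevant tool, adapted from the $k=2$ case to general higher order $k$. The goal is to control the growth of a modified energy $E(I_N u)$, where $I_N$ is the standard smoothing multiplier that acts as the identity on low frequencies $|\xi|\le N$ and like $N^{k/2-\gamma}|\xi|^{\gamma-k/2}$ on high frequencies $|\xi|\ge 2N$. By construction, for $u_0 \in H^\gamma$ one has $I_N u_0 \in H^{k/2}$ with the scaling bound $\|I_N u_0\|_{H^{k/2}} \lesssim N^{k/2-\gamma}\|u_0\|_{H^\gamma}$, so that $E(I_N u_0) \lesssim N^{k-2\gamma}$. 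The architecture of the argument is then: (1) a rescaling to normalize the data so the local theory applies on a fixed-length time interval; (2) a local-in-time control of the relevant $X^{s,b}$-type or Strichartz norms of $I_N u$ on that interval; (3) the central increment estimate showing $E(I_N u(t)) - E(I_N u(0))$ is bounded by a negative power of $N$; and (4) a bootstrap/iteration that sums these increments over $\sim N^{\text{(something)}}$ intervals to reach an arbitrary time $T$, after which undoing the scaling yields the stated polynomial-in-$T$ growth of $\|u(T)\|_{H^\gamma}$.

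Concretely, I would first establish the linear and bilinear Strichartz estimates for the higher-order propagator $e^{it\Lambda^k}$ and set up the associated $X^{s,b}_k$ spaces adapted to the symbol $|\xi|^k$; the dispersive decay and the admissibility conditions differ from the Schr\"odinger case because the group velocity scales like $|\xi|^{k-1}$, so the Strichartz exponents and the local existence time as a function of the data size must be recomputed. With the local theory in hand on an interval of length comparable to $1$ after rescaling, the heart of the matter is the energy increment: writing out $\frac{d}{dt}E(I_N u)$, the main and cubic-interaction terms cancel exactly when $I_N$ is the identity, and what survives is a commutator-type error measuring the failure of $I_N$ to commute with the nonlinearity $|u|^2 u$. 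I would estimate this error via a multilinear (four-linear, after integrating) frequency-space analysis: decompose the frequencies dyadically, use the pointwise multiplier bound on the symmetrized symbol $1 - \frac{m(\xi_1+\xi_2+\xi_3)}{m(\xi_1)m(\xi_2)m(\xi_3)}$ (where $m$ is the symbol of $I_N$), and bound the resulting multilinear form by the Strichartz/bilinear norms of $I_N u$. This produces an increment of the form $\|E(I_N u(t)) - E(I_N u(0))\| \lesssim N^{-\beta}$ for some explicit gain $\beta=\beta(k,\gamma)>0$.

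The main obstacle I expect is precisely optimizing this increment exponent $\beta$ and the number of iteration steps so that the two constraints balance out to give the threshold $\gamma(k)=\frac{k(4k-1)}{14k-3}$. One must (a) extract the maximal power of $N$ from the symbol bound together with a bilinear improvement that captures the transversality/separation of the higher-order characteristic surface $\tau=|\xi|^k$, and (b) track how the rescaling parameter $\lambda$ (chosen so that $N^{k-2\gamma}\lambda^{-(k-2\gamma)}$ is order one, i.e. $\lambda \sim N^{(k-2\gamma)/(k-2\gamma)}$ appropriately tuned) interacts with the number of time intervals needed to cover $[0,T]$. The bootstrap succeeds provided the total energy growth $(\text{number of steps}) \times N^{-\beta}$ stays bounded by the initial $E(I_N u_0)\sim N^{k-2\gamma}$; letting $N\to\infty$ and solving the resulting inequality for $\gamma$ yields the critical exponent. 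Finally, unwinding the scaling converts the $N$-dependence into the claimed bound $\|u(T)\|_{H^\gamma}\lesssim (1+T)^{\frac{(4k-1)(k-2\gamma)}{2((14k-3)\gamma-k(4k-1))}+}$, where the denominator vanishing exactly at $\gamma=\gamma(k)$ confirms that the threshold and the growth rate come from the same optimization. Verifying that the bilinear estimate indeed furnishes enough of a gain to reach this specific $\gamma(k)$ — rather than a weaker threshold — is the delicate quantitative step on which the whole theorem rests.
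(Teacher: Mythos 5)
Your plan follows the same route as the paper -- the CKSTT $I$-method with rescaling, a modified local theory in $X^{s,b}$ spaces adapted to $\tau=|\xi|^k$, an almost conservation law powered by the bilinear gain $(M_1/M_2)^{(k-1)/2}$, and an iteration undone by scaling -- but two of its quantitative steps are respectively wrong and missing, and the theorem's specific threshold lives exactly there.

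First, the scaling arithmetic. Under $u_\lambda(0,x)=\lambda^{-k/2}u_0(\lambda^{-1}x)$ one has $\|u_\lambda(0)\|_{\dot{H}^\gamma_x}=\lambda^{-\gamma}\|u_0\|_{\dot{H}^\gamma_x}$, so the rescaled modified energy obeys $E(I_Nu_\lambda(0))\lesssim N^{k-2\gamma}\lambda^{-2\gamma}(1+\|u_0\|_{H^\gamma_x})^4$, not $N^{k-2\gamma}\lambda^{-(k-2\gamma)}$ as you wrote; the correct tuning is $\lambda\sim N^{(k-2\gamma)/(2\gamma)}$, not $\lambda\sim N$. This is not cosmetic: after $\sim N^{\gamma_0(k)}$ applications of the local step one reaches the rescaled time $\sim N^{\gamma_0(k)}\delta$, i.e.\ the original time $T\sim N^{\gamma_0(k)}\delta/\lambda^k$, and global existence requires $\gamma_0(k)-k(k-2\gamma)/(2\gamma)>0$, which is precisely $\gamma>k^2/(2(\gamma_0(k)+k))=\gamma(k)$. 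With your $\lambda\sim N$ you would instead need $\gamma_0(k)>k$, which is false here since the paper's almost-conservation exponent is $\gamma_0(k)=\frac{k(6k-1)}{8k-2}<k$; taken literally, your tuning never reaches an arbitrary time $T$. Relatedly, your bootstrap criterion ``(number of steps) $\times N^{-\beta}$ bounded by $E(I_Nu_0)\sim N^{k-2\gamma}$'' mixes the rescaled and unrescaled pictures: the whole point of rescaling is that $E(I_Nu_\lambda(0))\le 1/2$ and that the local existence time $\delta$ stays uniform as long as the modified energy stays below $1$, so the admissible number of steps is $\sim N^{\beta}$.

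Second, the decisive input -- the value of $\beta$ -- is deferred rather than proved, and without it the threshold $\gamma(k)=\frac{k(4k-1)}{14k-3}$ and the growth exponent cannot even be stated. The paper obtains $|E(Iu(t))-E(Iu_0)|\lesssim N^{-\gamma_0(k)+}$ by substituting $iI\partial_t u=-\Lambda^k Iu-I(|u|^2u)$, which produces \emph{two} terms: a quadrilinear one weighted by $\Lambda^k Iu$ (your ``four-linear'' form) and a second, sextilinear one containing $I(|u|^2u)$, which your sketch omits and which needs its own Leibniz/Strichartz treatment. The quadrilinear term is then handled by a dyadic case analysis (mean value theorem on the multiplier $\mu$ when only one frequency exceeds $N$; monotonicity of $m(\lambda)\lambda^{\alpha}$ with the balanced choice $\alpha(k)=k-\gamma_0(k)=\frac{k(2k-1)}{8k-2}$ otherwise), using both the bilinear estimate and the modified local bound $\|Iu\|_{X^{k/2,1/2+}_\delta}\lesssim 1$ -- whose proof itself needs the CKSTT interpolation lemma to make the trilinear estimate uniform in $N$. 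You name these ingredients but prove none of them, and you yourself flag the optimization of $\beta$ as the step ``on which the whole theorem rests.'' So, as written, the proposal is the right method with a broken scaling computation and with its core estimate absent.
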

The proof of this theorem is based on the $I$-method similar to \cite{CoKeStaTaTao-almost-conservation} (see also \cite{Guo}). We shall consider a modified $I$-operator and show a suitable ``almost conservation law'' for the higher-order Schr\"odinger equation. The global well-posedness then follows by a usual scheme as in \cite{CoKeStaTaTao-almost-conservation}.  \newline 
\indent This paper is organized as follows. In Section $\ref{section preliminaries}$, we recall some linear and bilinear estimates for the higher-order Schr\"odinger equation, and also a modified $I$-operator together with its basic properties. We will show in Section $\ref{section almost conservation law}$ an almost conservation law and a modified local well-posed result. The proof of Theorem $\ref{theorem global existence}$ is proved in Section $\ref{section global proof}$. Throughout this paper, we shall use $A \lesssim B$ to denote an estimate of the form $A \leq C B$ for some absolute constant $C$. The notation $A \sim B$ means that $A \lesssim B$ and $B \lesssim A$. We write $A \ll B$ to denote $A \leq c B$ for some small constant $c>0$. We also use the Japanese bracket $\scal{a}:=\sqrt{1+|a|^2} \sim 1+|a|$ and $a\pm:= a\pm \ep$ with some universal constant $0<\ep \ll 1$. 
\section{Preliminaries} \label{section preliminaries}
\subsection{Littlewood-Paley decomposition}
Let $\varphi$ be a smooth, real-valued, radial function in $\R^k$ such that $\varphi(\xi)=1$ for $|\xi|\leq 1$ and $\varphi(\xi)=0$ for $|\xi|\geq 2$. 
Let $M=2^k, k \in \Z$. We denote the Littlewood-Paley operators by 
\begin{align*}
\widehat{P_{\leq M} f} (\xi) &:= \varphi(M^{-1}\xi) \hat{f}(\xi), \\
\widehat{P_{>M} f}(\xi) &:= (1-\varphi(M^{-1}\xi)) \hat{f}(\xi), \\
\widehat{P_M f}(\xi) &:= (\varphi(M^{-1}\xi) - \varphi(2M^{-1}\xi)) \hat{f}(\xi),
\end{align*}
where $\hat{\cdot}$ is the spatial Fourier transform. We similarly define 
\[
P_{<M}:= P_{\leq M} -P_M, \quad P_{\geq M} := P_{>M}+P_M,
\]
and for $M_1 \leq M_2$,
\[
P_{M_1<\cdot \leq M_2}:= P_{\leq M_2} - P_{\leq M_1} = \sum_{M_1<M\leq M_2} P_{M}.
\]
We have the following so called Bernstein's inequalities (see e.g. \cite[Chapter 2]{BCDfourier} or \cite[Appendix]{Tao}).
\begin{lem} \label{lem bernstein inequality}
Let $\gamma \geq 0$ and $1\leq p \leq q \leq \infty$. 
\begin{align*}
\|P_{\geq M} f\|_{L^p_x} &\lesssim M^{-\gamma}\| \Lambda^\gamma P_{\geq M} f\|_{L^p_x}, \\
\|P_{\leq M} \Lambda^\gamma f\|_{L^p_x} &\lesssim M^\gamma \|P_{\leq M} f\|_{L^p_x}, \\
\|P_M \Lambda^{\pm \gamma} f \|_{L^p_x} &\sim M^{\pm \gamma} \|P_M f\|_{L^p_x}, \\
\|P_{\leq M} f\|_{L^q_x} &\lesssim M^{k/p-k/q} \|P_{\leq M} f\|_{L^p_x}, \\
\|P_M f\|_{L^q_x} &\lesssim M^{k/p-k/q} \|P_M f\|_{L^p_x}.
\end{align*}
\end{lem}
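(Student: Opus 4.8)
The plan is to realize each operator as convolution against a rescaled fixed kernel and then to read every bound off Young's inequality. Write $\psi(\xi):=\varphi(\xi)-\varphi(2\xi)$, a smooth radial function supported in the annulus $\{1/2\leq|\xi|\leq2\}$, so that $\widehat{P_Mf}(\xi)=\psi(M^{-1}\xi)\hat f(\xi)$ and $P_Mf=K_M*f$ with $K_M(x)=M^kK(Mx)$, $K=\mathcal F^{-1}\psi\in\Sc(\R^k)$. The elementary scaling identity $\|K_M\|_{L^r_x}=M^{k(1-1/r)}\|K\|_{L^r_x}$, valid for every $1\leq r\leq\infty$, is the engine behind all five estimates; the corresponding statements for $P_{\leq M}$ use $\varphi(M^{-1}\cdot)$ in place of $\psi(M^{-1}\cdot)$.

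I would begin with the annular inequality $\|P_M\Lambda^{\pm\gamma}f\|_{L^p_x}\sim M^{\pm\gamma}\|P_Mf\|_{L^p_x}$, from which the rest follow. Fix a fattened cutoff $\tilde\psi\in C_c^\infty(\R^k)$ that equals $1$ on the support of $\psi$ and vanishes near the origin, so that $\tilde\psi(M^{-1}\cdot)$ acts as the identity on $P_Mf$. Since $\Lambda^{\pm\gamma}$ and $P_M$ commute, $P_M\Lambda^{\pm\gamma}f$ is the convolution of $P_Mf$ with the kernel of the multiplier $|\xi|^{\pm\gamma}\tilde\psi(M^{-1}\xi)=M^{\pm\gamma}a_\pm(M^{-1}\xi)$, where $a_\pm(\eta):=|\eta|^{\pm\gamma}\tilde\psi(\eta)$. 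Because $\tilde\psi$ is supported away from the origin, $a_\pm$ is smooth and compactly supported, hence $\mathcal F^{-1}a_\pm\in\Sc(\R^k)\subset L^1_x$, and the scaling identity with $r=1$ gives an $L^1_x$ kernel of norm $\sim M^{\pm\gamma}$. Young's inequality then yields $\|P_M\Lambda^{\pm\gamma}f\|_{L^p_x}\lesssim M^{\pm\gamma}\|P_Mf\|_{L^p_x}$; applying the companion bound with the opposite sign to $\Lambda^{\pm\gamma}f$ gives $\|P_Mf\|_{L^p_x}\lesssim M^{\mp\gamma}\|P_M\Lambda^{\pm\gamma}f\|_{L^p_x}$, which is the reverse inequality, so the equivalence follows.

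The two Lebesgue--improving bounds come from the same template: writing $P_Mf=\tilde K_M*(P_Mf)$ with $\tilde K=\mathcal F^{-1}\tilde\psi\in\Sc(\R^k)$ and applying Young's inequality with $1+1/q=1/r+1/p$, so that $1-1/r=1/p-1/q$, gives
\[
\|P_Mf\|_{L^q_x}\leq\|\tilde K_M\|_{L^r_x}\|P_Mf\|_{L^p_x}=M^{k(1/p-1/q)}\|\tilde K\|_{L^r_x}\|P_Mf\|_{L^p_x},
\]
which is the last stated inequality, and the version for $P_{\leq M}$ is identical. The high--frequency derivative bound $\|P_{\geq M}f\|_{L^p_x}\lesssim M^{-\gamma}\|\Lambda^\gamma P_{\geq M}f\|_{L^p_x}$ I would deduce by decomposing $P_{\geq M}=\sum_{j\geq0}P_{2^jM}$ into dyadic annuli: each piece obeys $\|P_{2^jM}f\|_{L^p_x}\lesssim(2^jM)^{-\gamma}\|\Lambda^\gamma P_{\geq M}f\|_{L^p_x}$ by the annular estimate together with the uniform $L^p_x$--boundedness of a fattened annular projection, and summing the convergent geometric series $\sum_{j\geq0}2^{-j\gamma}$ (here $\gamma>0$; the case $\gamma=0$ is trivial) produces the factor $M^{-\gamma}$.

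The remaining low--frequency bound $\|P_{\leq M}\Lambda^\gamma f\|_{L^p_x}\lesssim M^\gamma\|P_{\leq M}f\|_{L^p_x}$ contains the one genuinely delicate point, and is where I expect the main obstacle. Summing the annular estimate over $1<N\leq M$ handles all but the bottom piece $P_{\leq1}\Lambda^\gamma$, whose symbol $c(\xi):=|\xi|^\gamma\tilde\varphi(\xi)$ is compactly supported but only continuous at the origin (not smooth, unless $\gamma$ is an even integer), so the rescaled--Schwartz--kernel argument does not apply directly. The crux is therefore to prove $\mathcal F^{-1}c\in L^1_x$ with a bound uniform in $M$. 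I would do this by a Littlewood--Paley decomposition of the symbol near the origin, writing $c=\sum_{j\geq0}c_j$ with $c_j$ supported where $|\xi|\sim2^{-j}$ and of size $\sim2^{-j\gamma}$; the inverse transform of each $c_j$ is a bump of height $\sim2^{-j(\gamma+k)}$ concentrated on $|x|\lesssim2^j$, so $\|\mathcal F^{-1}c_j\|_{L^1_x}\sim2^{-j\gamma}$, and the series $\sum_{j\geq0}2^{-j\gamma}$ converges precisely because $\gamma>0$. This gives $\mathcal F^{-1}c\in L^1_x$, and Young's inequality against the fattened projection $P_{\leq1}$ then closes the last estimate and completes the proof.
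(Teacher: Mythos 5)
The paper never proves this lemma: it is quoted as standard, with pointers to \cite[Chapter 2]{BCDfourier} and the appendix of \cite{Tao}, so there is no in-paper argument to compare against. Your proof is correct and is, in substance, exactly the argument of those references: every projection is a convolution with a rescaled kernel, the scaling identity $\|K_M\|_{L^r_x}=M^{k(1-1/r)}\|K\|_{L^r_x}$ plus Young's inequality gives the annular and Lebesgue-improving bounds, and the two one-sided estimates follow by dyadic summation of geometric series, which is where $\gamma>0$ enters. You also correctly isolated the only delicate point: the symbol $|\xi|^{\gamma}\tilde\varphi(\xi)$ of the bottom piece is merely continuous at the origin when $\gamma$ is not an even integer, so integrability of its inverse Fourier transform must be proved by hand, and your dyadic decomposition of the symbol (the piece at $|\xi|\sim 2^{-j}$ contributing $\sim 2^{-j\gamma}$ to the $L^1_x$ norm of the kernel) is the standard and correct fix.

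One point to tighten. In the high-frequency bound you decompose $P_{\geq M}f=\sum_{j\geq 0}P_{2^jM}f$ and assert that each piece obeys $\|P_{2^jM}f\|_{L^p_x}\lesssim (2^jM)^{-\gamma}\|\Lambda^\gamma P_{\geq M}f\|_{L^p_x}$. For $j\geq 1$ this is immediate, because the symbol $1-\varphi(2M^{-1}\xi)$ of $P_{\geq M}$ equals $1$ on $\{|\xi|\geq M\}$, hence $P_{2^jM}P_{\geq M}=P_{2^jM}$; but for $j=0$ that identity fails, since this symbol is not identically $1$ on the support of $\psi(M^{-1}\cdot)$. The repair uses only tools you already invoke: for these cutoffs one has the exact operator identity $P_M=P_{\leq M}P_{\geq M}$ (because $\varphi(M^{-1}\xi)=1$ wherever $\varphi(2M^{-1}\xi)\neq 0$), so $P_Mf=P_{\leq M}(P_{\geq M}f)$ has Fourier support in $\{M/2\leq|\xi|\leq 2M\}$, and composing with your fattened-annulus multiplier $\Lambda^{-\gamma}\tilde P_M$, whose kernel has $L^1_x$ norm $\sim M^{-\gamma}$, together with the uniform $L^p_x$ bound for $P_{\leq M}$, gives the $j=0$ case; alternatively, run the decomposition on $g:=P_{\geq M}f$ itself, writing $g=\sum_{j\geq -1}P_{2^jM}g$. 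The analogous remark applies at the endpoints of the sum in your proof of the second inequality, where you need $\|P_N f\|_{L^p_x}\lesssim\|P_{\leq M}f\|_{L^p_x}$ rather than $\lesssim\|f\|_{L^p_x}$; this follows from $P_N=P_NP_{\leq M}$ for $2N\leq M$, from $P_M=P_{\geq M}P_{\leq M}$ at the top, and from $P_{\leq 1}=P_{\leq 1}P_{\leq M}$ at the bottom. These are two-line repairs; the structure and all quantitative claims of your proof are sound.
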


\subsection{Norms and Strichartz estimates}
Let $\gamma, b\in \R$. The Bourgain space $X_{\tau=|\xi|^k}^{\gamma,b}$ is the closure of space-time Schwartz space $\mathscr{S}_{t,x}$ under the norm
\[
\|u\|_{X_{\tau=|\xi|^k}^{\gamma,b}}:= \|\scal{\xi}^\gamma \scal{\tau-|\xi|^k}^b \tilde{u}\|_{L^2_\tau L^2_\xi},
\] 
where $\tilde{\cdot}$ is the space-time Fourier transform,i.e. 
\[
\tilde{u}(\tau, \xi):= \iint e^{-i(t\tau+x\xi)} u(t,x)dt dx.
\]
We shall use $X^{\gamma,b}$ instead of $X^{\gamma,b}_{\tau=|\xi|^k}$ when there is no confusion. We recall a following special property of $X^{\gamma,b}$ space (see e.g. \cite[Lemma 2.9]{Tao}).
\begin{lem} \label{lem special property}
Let $\gamma, \gamma_1, \gamma_2 \in \R$ and $Y$ be a Banach space of functions on $\R \times \R^k$. If
\[
\|e^{it\tau} e^{it\Lambda^k} f\|_Y \lesssim \|f\|_{H^\gamma_x},
\]
for all $f \in H^\gamma_x$ and all $\tau \in \R$, then
\[
\|u\|_Y \lesssim \|u\|_{X^{\gamma,1/2+}},
\]
for all $u \in \mathscr{S}_{t,x}$. Moreover, if
\[
\|[e^{it\tau} e^{it\Lambda^k} f_1] [e^{it\zeta} e^{it\Lambda^k} f_2]\|_Y \lesssim \|f_1\|_{H^{\gamma_1}} \|f_2\|_{H^{\gamma_2}_x},
\]
for all $f_1 \in H^{\gamma_1}_x, f_2 \in H^{\gamma_2}_x$ and all $\tau, \zeta \in \R$, then 
\[
\|u_1 u_2\|_Y \lesssim \|u_1\|_{X^{\gamma_1,1/2+}} \|u_2\|_{X^{\gamma_2,1/2+}},
\]
for all $u_1, u_2 \in \mathscr{S}_{t,x}$.
\end{lem}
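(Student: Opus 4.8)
The plan is to use the standard transference principle for Bourgain spaces, whose engine is that any space-time Schwartz function can be synthesized as a continuous superposition of modulated free solutions $e^{it\tau}e^{it\Lambda^k}f$. First I would decompose $u$ along the modulation variable: writing $u$ by Fourier inversion and setting $\lambda = \tau - |\xi|^k$, for each fixed $\lambda$ I define $f_\lambda \in H^\gamma_x$ through $\hat{f_\lambda}(\xi) := \tilde{u}(\lambda + |\xi|^k, \xi)$, which yields the representation
\[
u(t,x) = \int_\R e^{it\lambda}\,[e^{it\Lambda^k}f_\lambda](x)\,d\lambda.
\]
The merit of this identity is that the bracketed factor is exactly a free solution, so the scalar $e^{it\lambda}$ plays precisely the role of the modulation $e^{it\tau}$ in the hypothesis.

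Next I would move the $Y$-norm inside the integral by Minkowski's integral inequality and invoke the hypothesis with $\tau = \lambda$, obtaining
\[
\|u\|_Y \le \int_\R \|e^{it\lambda}e^{it\Lambda^k}f_\lambda\|_Y\,d\lambda \lesssim \int_\R \|f_\lambda\|_{H^\gamma_x}\,d\lambda.
\]
It then remains to dominate the $\lambda$-integral by $\|u\|_{X^{\gamma,1/2+}}$, and here lies the one genuinely quantitative step. Writing $1 = \scal{\lambda}^{-(1/2+\ep)}\scal{\lambda}^{1/2+\ep}$ and applying Cauchy-Schwarz in $\lambda$,
\[
\int_\R \|f_\lambda\|_{H^\gamma_x}\,d\lambda \le \Big(\int_\R \scal{\lambda}^{-(1+2\ep)}\,d\lambda\Big)^{1/2}\Big(\int_\R \scal{\lambda}^{1+2\ep}\|f_\lambda\|_{H^\gamma_x}^2\,d\lambda\Big)^{1/2}.
\]
The first factor is finite precisely because $1 + 2\ep > 1$ (this is exactly where the requirement $b > 1/2$ enters), while for the second I change variables back to $\tau = \lambda + |\xi|^k$ and unwind the definition of $f_\lambda$ to recognize
\[
\int_\R \scal{\lambda}^{1+2\ep}\|f_\lambda\|_{H^\gamma_x}^2\,d\lambda = \int_\R\!\!\int_{\R^k} \scal{\tau-|\xi|^k}^{1+2\ep}\scal{\xi}^{2\gamma}|\tilde{u}(\tau,\xi)|^2\,d\xi\,d\tau = \|u\|_{X^{\gamma,1/2+}}^2,
\]
which closes the linear estimate.

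For the bilinear statement I would run the same synthesis on each factor, writing $u_j(t) = \int_\R e^{it\lambda_j}e^{it\Lambda^k}f_{j,\lambda_j}\,d\lambda_j$ for $j = 1, 2$, so that $u_1u_2$ becomes a double integral over $(\lambda_1,\lambda_2)$ of products of modulated free solutions. Minkowski's inequality together with the bilinear hypothesis (applied with $\tau = \lambda_1$ and $\zeta = \lambda_2$) then bounds $\|u_1u_2\|_Y$ by the product $\big(\int_\R \|f_{1,\lambda_1}\|_{H^{\gamma_1}_x}\,d\lambda_1\big)\big(\int_\R \|f_{2,\lambda_2}\|_{H^{\gamma_2}_x}\,d\lambda_2\big)$, and each factor is controlled by the corresponding norm $\|u_j\|_{X^{\gamma_j,1/2+}}$ exactly by the Cauchy-Schwarz computation above.

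I do not expect a serious obstacle, since the argument is essentially formal for Schwartz data; the only points needing care are the justification of interchanging the $Y$-norm with the $\lambda$-integral (Minkowski's integral inequality, legitimate because $u \in \mathscr{S}_{t,x}$ makes $\lambda \mapsto f_\lambda$ smooth and rapidly decaying) and the bookkeeping of $\ep$ so that the weight $\scal{\lambda}^{-(1+2\ep)}$ stays integrable.
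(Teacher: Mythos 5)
Your proposal is correct and coincides with the standard proof of this result: the paper itself does not prove the lemma but simply cites \cite{Tao} (Lemma 2.9 there), and your argument --- Fourier synthesis of $u$ as a superposition $u(t,x)=\int_\R e^{it\lambda}[e^{it\Lambda^k}f_\lambda](x)\,d\lambda$ with $\hat{f}_\lambda(\xi)=\tilde{u}(\lambda+|\xi|^k,\xi)$, Minkowski's integral inequality, the hypothesis with $\tau=\lambda$, and weighted Cauchy--Schwarz in $\lambda$ using integrability of $\scal{\lambda}^{-1-2\ep}$ --- is precisely the transference argument given in that reference, for both the linear and bilinear statements. The only caveats are harmless: the normalization constant in Fourier inversion and the Bochner-integrability needed for Minkowski, which you correctly note holds for Schwartz data.
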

\indent Throughout this paper, a pair $(p,q)$ is called admissible in $\R^k$ if 
\begin{align}
(p,q)\in [2,\infty]^2, \quad (q,p)\ne (2,\infty), \quad \frac{1}{p}+\frac{1}{q}=\frac{1}{2}. \label{admissible} 
\end{align}
We recall the following Strichartz estimate (see e.g. \cite{Dinh}, \cite{Pausader}).
\begin{prop} \label{prop strichartz}
Let $k \geq 2, k \in \Z$. Suppose that $u$ is a solution to 
\[
i\partial_t u(t,x) + \Lambda^k u(t,x) = F(t,x), \quad u(0,x)=u_0(x), \quad (t,x)\in \R \times \R^k.
\]
Then for all $(p,q)$ and $(a,b)$ admissible pairs, 
\begin{align}
\|u\|_{L^p_t L^q_x} \lesssim \|u_0\|_{L^2_x} + \|F\|_{L^{a'}_t L^{b'}_x}. \nonumber
\end{align}
Here $(a,a')$ and $(b,b')$ are H\"older exponents.
\end{prop}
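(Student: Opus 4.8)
The plan is to derive the estimate from the unitary group $e^{it\Lambda^k}$ by the standard Strichartz scheme adapted to the dispersion relation $\tau=|\xi|^k$. By Duhamel's formula $u(t)=e^{it\Lambda^k}u_0-i\int_0^t e^{i(t-s)\Lambda^k}F(s)\,ds$, it suffices to establish the homogeneous estimate $\|e^{it\Lambda^k}u_0\|_{L^p_tL^q_x}\lesssim\|u_0\|_{L^2_x}$; the inhomogeneous term then follows by composing this estimate with its adjoint to control the full time integral $\int_{\R}$, and by invoking the Christ--Kiselev lemma to pass to the retarded integral $\int_0^t$. This is exactly where the hypothesis on the pairs enters: excluding the single forbidden endpoint means no genuine endpoint (Keel--Tao) argument is required and Christ--Kiselev applies directly.

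The analytic heart is a frequency-localized dispersive estimate. Writing $P_N$ for the Littlewood--Paley projection onto $|\xi|\sim N$, the kernel of $e^{it\Lambda^k}P_N$ is an oscillatory integral with phase $x\cdot\xi+t|\xi|^k$. On the annulus $|\xi|\sim N$ the Hessian of $|\xi|^k$ is non-degenerate of full rank $k$, with eigenvalues comparable to $N^{k-2}$, so stationary phase (or van der Corput) yields
\[
\|e^{it\Lambda^k}P_N f\|_{L^\infty_x}\lesssim N^{-\frac{k(k-2)}2}\,|t|^{-k/2}\,\|P_N f\|_{L^1_x},
\]
the same $|t|^{-k/2}$ decay as the $k$-dimensional Schr\"odinger group, but carrying a frequency weight reflecting the order $k$. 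Interpolating this with the conservation $\|e^{it\Lambda^k}P_N f\|_{L^2_x}=\|P_N f\|_{L^2_x}$ produces the dispersive bound from $L^{q'}_x$ into $L^q_x$.

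From here I would run $TT^*$ together with the Hardy--Littlewood--Sobolev inequality in the time variable to obtain the fixed-frequency Strichartz estimate at the Schr\"odinger-admissible exponents $\frac2p+\frac kq=\frac k2$, carrying the frequency weight above, and then transfer to the scaling-admissible exponents $\frac1p+\frac1q=\frac12$ of the statement by a Bernstein inequality. The crucial and most delicate step is the summation of the dyadic pieces through the square-function estimate (legitimate since $p,q\geq2$): one must verify that the net power of $N$ is favorable, so that $\big(\sum_N\|e^{it\Lambda^k}P_N u_0\|_{L^p_tL^q_x}^2\big)^{1/2}\lesssim\|u_0\|_{L^2_x}$. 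This is precisely where the coincidence of the spatial dimension with the order of the operator is used, the weights coming from the dispersive estimate and from Bernstein combining to leave a gain at high frequencies; the low frequencies require the complementary non-dispersive bound $\|e^{it\Lambda^k}P_N f\|_{L^\infty_x}\lesssim N^k\|P_N f\|_{L^1_x}$ valid on short time scales, after which the sum converges. I expect this frequency summation, rather than the stationary-phase step, to be the main obstacle, since it is where the $k$-dependent exponents must conspire to reproduce the clean, derivative-free bound recorded in the proposition.
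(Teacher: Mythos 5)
The paper itself gives no proof of this proposition; it is recalled with a citation to \cite{Dinh} and \cite{Pausader}, so the only comparison available is with the standard argument of those references, which is precisely the route you take: frequency-localized dispersive estimate by stationary phase, $TT^*$ with Hardy--Littlewood--Sobolev in time, transfer of exponents by Bernstein, square-function summation, and Duhamel plus Christ--Kiselev for the retarded term. Your stationary-phase input is correct: on $|\xi|\sim N$ the Hessian of $|\xi|^k$ has eigenvalues $k(k-1)|\xi|^{k-2}$ (radial) and $k|\xi|^{k-2}$ (tangential, multiplicity $k-1$), so the kernel bound $|t|^{-k/2}N^{-k(k-2)/2}$ holds; and since every admissible pair other than the excluded endpoint $L^2_tL^\infty_x$ has $p>2\geq a'$, Christ--Kiselev does apply.

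The one point to correct is your bookkeeping of the powers of $N$, which you flag as the main obstacle: there is no net ``gain at high frequencies,'' and no separate low-frequency or short-time argument is needed. With $\tilde q$ the $k$-dimensional Schr\"odinger-admissible companion of $p$, i.e. $\frac{2}{p}+\frac{k}{\tilde q}=\frac{k}{2}$, the $TT^*$ step gives
\[
\|e^{it\Lambda^k}P_Nu_0\|_{L^p_tL^{\tilde q}_x}\lesssim N^{-\frac{k(k-2)}{2}\left(\frac12-\frac{1}{\tilde q}\right)}\|P_Nu_0\|_{L^2_x}=N^{-\frac{k-2}{p}}\|P_Nu_0\|_{L^2_x},
\]
while Bernstein from $L^{\tilde q}_x$ to $L^q_x$ (with $\frac1p+\frac1q=\frac12$, so $\tilde q\leq q$) costs exactly $N^{k(\frac{1}{\tilde q}-\frac1q)}=N^{+\frac{k-2}{p}}$. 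The two exponents cancel identically --- as they must, since $P_N$ is conjugated to $P_1$ by dilation and both sides of the asserted estimate scale the same way precisely when $\frac1p+\frac1q=\frac12$ --- so the localized estimate $\|e^{it\Lambda^k}P_Nu_0\|_{L^p_tL^q_x}\lesssim\|P_Nu_0\|_{L^2_x}$ is uniform over all $N\in 2^{\Z}$, small as well as large. This uniformity is all your square-function step requires: by Littlewood--Paley and Minkowski (legitimate since $p,q\geq 2$),
\[
\|e^{it\Lambda^k}u_0\|_{L^p_tL^q_x}\lesssim\Big(\sum_N\|e^{it\Lambda^k}P_Nu_0\|_{L^p_tL^q_x}^2\Big)^{1/2}\lesssim\Big(\sum_N\|P_Nu_0\|_{L^2_x}^2\Big)^{1/2}\sim\|u_0\|_{L^2_x},
\]
the sum closing by orthogonality, not by convergence of any geometric series. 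Your proposed fallback at low frequencies --- a kernel bound ``valid on short time scales'' --- could not in any case yield the global-in-time $L^p_t$ estimate being claimed; fortunately it is not needed.
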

A direct consequence of Lemma $\ref{lem special property}$ and Proposition $\ref{prop strichartz}$ is the following linear estimate in $X^{\gamma,b}$ space.
\begin{coro}\label{coro linear Xsb}
Let $(p,q)$ be an admissible pair. Then 
\begin{align}
\|u\|_{L^p_t L^q_x} \lesssim \|u\|_{X^{0,1/2+}},  \label{strichartz estimate}
\end{align}
for all $u \in \mathscr{S}_{t,x}$.
\end{coro}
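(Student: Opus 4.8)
The plan is to obtain \eqref{strichartz estimate} by feeding the homogeneous Strichartz bound of Proposition~\ref{prop strichartz} into the transference principle of Lemma~\ref{lem special property}. Concretely, I would apply Lemma~\ref{lem special property} with the Banach space $Y = L^p_t L^q_x$ and with $\gamma = 0$. Its conclusion is then exactly the asserted inequality $\|u\|_{L^p_t L^q_x} \lesssim \|u\|_{X^{0,1/2+}}$, so the entire task reduces to verifying the single hypothesis of that lemma, namely the fixed-modulation bound
\[
\|e^{it\tau} e^{it\Lambda^k} f\|_{L^p_t L^q_x} \lesssim \|f\|_{L^2_x}
\]
uniformly in $\tau \in \R$ and for all $f \in L^2_x$.

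To check this hypothesis I would first note that the scalar factor $e^{it\tau}$ has modulus one and depends only on $t$; hence it leaves the $L^q_x$ norm at each fixed time unchanged and therefore drops out of the $L^p_t L^q_x$ norm entirely, so the left-hand side equals $\|e^{it\Lambda^k} f\|_{L^p_t L^q_x}$, independently of $\tau$. Next I would observe that $v(t,x) := \left( e^{it\Lambda^k} f \right)(x)$ is precisely the solution of the homogeneous problem $i\partial_t v + \Lambda^k v = 0$ with $v(0) = f$, that is, the case $F \equiv 0$ of Proposition~\ref{prop strichartz}. Applying that proposition with $u = v$, $u_0 = f$, and $F = 0$ immediately gives $\|v\|_{L^p_t L^q_x} \lesssim \|f\|_{L^2_x}$, which is exactly the required hypothesis, valid uniformly in $\tau$.

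There is no genuine obstacle: as the statement is labelled a direct consequence, the proof is just the composition of two already-established facts. The only two points deserving a word of care are the harmlessness of the phase $e^{it\tau}$ (which is what makes the hypothesis of Lemma~\ref{lem special property} hold \emph{uniformly} in $\tau$) and the identification of $e^{it\Lambda^k} f$ with the free evolution, so that Proposition~\ref{prop strichartz} applies with vanishing inhomogeneity. Once the hypothesis is secured, Lemma~\ref{lem special property} automatically upgrades the free-evolution estimate to the $X^{0,1/2+}$ bound \eqref{strichartz estimate} for every $u \in \mathscr{S}_{t,x}$.
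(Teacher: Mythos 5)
Your proposal is correct and coincides with the paper's own reasoning: the paper simply declares the corollary to be a direct consequence of Lemma~\ref{lem special property} and Proposition~\ref{prop strichartz}, which is precisely the transference argument you carry out, with the added (correct) details that the unimodular factor $e^{it\tau}$ drops out of the $L^p_t L^q_x$ norm and that $e^{it\Lambda^k}f$ is the homogeneous solution covered by the $F\equiv 0$ case of the Strichartz estimate.
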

We also have the following bilinear estimate in $\R^k$.
\begin{prop} \label{prop bilinear}
Let $k\geq 2, k \in \Z$ and $M_1, M_2  \in 2^\Z$ be such that $M_1\leq M_2$. Then
\begin{align}
\|[e^{it\Lambda^k}P_{M_1} u_0] [e^{it\Lambda^k} P_{M_2} v_0]\|_{L^2_tL^2_x} \lesssim (M_1/M_2)^{(k-1)/2} \|u_0\|_{L^2_x} \|v_0\|_{L^2_x}. \nonumber
\end{align}
\end{prop}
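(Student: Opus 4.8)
The plan is to pass to the Fourier side and reduce everything to a bound on the measure of a level set of the resonance function. Write $f=\widehat{P_{M_1}u_0}$ and $g=\widehat{P_{M_2}v_0}$, so that $f$ is supported in $|\xi|\sim M_1$ and $g$ in $|\xi|\sim M_2$. Since $\widehat{e^{it\Lambda^k}P_{M_j}w_0}(\xi)=e^{it|\xi|^k}\widehat{P_{M_j}w_0}(\xi)$, the space--time Fourier transform of the product $[e^{it\Lambda^k}P_{M_1}u_0][e^{it\Lambda^k}P_{M_2}v_0]$, after carrying out the $t$--integration, equals
\[
c\int_{\R^k}\delta\!\left(\tau-|\xi|^k-|\eta-\xi|^k\right)f(\xi)\,g(\eta-\xi)\,d\xi .
\]
By Plancherel it then suffices to bound the $L^2_{\tau,\eta}$ norm of this quantity. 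I would apply the Cauchy--Schwarz inequality in $\xi$ with respect to the measure $d\mu_{\tau,\eta}(\xi)=\delta(\tau-|\xi|^k-|\eta-\xi|^k)\,d\xi$, which splits the square of the inner integral into the total mass $J(\tau,\eta):=\int d\mu_{\tau,\eta}(\xi)$ times $\int|f(\xi)|^2|g(\eta-\xi)|^2\,d\mu_{\tau,\eta}(\xi)$. Integrating the second factor first in $\tau$ removes the Dirac mass and then, by Fubini and translation invariance, returns exactly $\|f\|_{L^2}^2\|g\|_{L^2}^2=\|u_0\|_{L^2}^2\|v_0\|_{L^2}^2$. Hence the whole estimate reduces to the uniform bound $\sup_{\tau,\eta}J(\tau,\eta)\lesssim(M_1/M_2)^{k-1}$, taken over the frequency support $|\xi|\sim M_1$, $|\eta-\xi|\sim M_2$.

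To estimate $J$ I would use the coarea formula,
\[
J(\tau,\eta)=\int_{\{|\xi|^k+|\eta-\xi|^k=\tau\}}\frac{\mathbf{1}_{\{|\xi|\sim M_1\}}}{\left|\nabla_\xi\big(|\xi|^k+|\eta-\xi|^k\big)\right|}\,d\sigma(\xi),
\]
so that two ingredients are needed: a lower bound on the gradient and an upper bound on the surface area of the level set inside the ball $\{|\xi|\lesssim M_1\}$. For the gradient, set $\xi_1=\xi$, $\xi_2=\eta-\xi$ and compute $\nabla_\xi(|\xi_1|^k+|\xi_2|^k)=k(|\xi_1|^{k-2}\xi_1-|\xi_2|^{k-2}\xi_2)$. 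On the support the two terms have sizes $|\xi_1|^{k-1}\lesssim M_1^{k-1}$ and $|\xi_2|^{k-1}\sim M_2^{k-1}$; provided $M_1\le M_2/C$ for a suitable absolute constant $C$, the high-frequency term dominates and the reverse triangle inequality yields $|\nabla_\xi(|\xi_1|^k+|\xi_2|^k)|\gtrsim M_2^{k-1}$ uniformly.

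For the surface-area bound I would observe that over the ball $\{|\xi|\lesssim M_1\}$ the gradient is essentially frozen: the Hessian of $|\xi|^k+|\eta-\xi|^k$ is $O(M_2^{k-2})$, so $\nabla_\xi g$ varies by at most $O(M_2^{k-2}M_1)$, which is $\ll M_2^{k-1}$ precisely when $M_1\ll M_2$. Thus in this ball the resonance surface $\{g=\tau\}$ is a Lipschitz graph over the hyperplane orthogonal to the nearly constant direction $\eta$, with small Lipschitz constant, so its $(k-1)$-dimensional measure is $\lesssim M_1^{k-1}$. Combining the two bounds gives $J(\tau,\eta)\lesssim M_1^{k-1}/M_2^{k-1}=(M_1/M_2)^{k-1}$, and taking square roots produces the claimed factor $(M_1/M_2)^{(k-1)/2}$. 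The finitely many remaining dyadic configurations $M_1\sim M_2$, where the gradient may degenerate through cancellation, are harmless: there the target factor is $O(1)$ and one simply uses H\"older together with the $L^4_{t,x}$ Strichartz bound from Proposition \ref{prop strichartz} (the pair $(4,4)$ being admissible), so that $\|uv\|_{L^2_{t,x}}\le\|u\|_{L^4_{t,x}}\|v\|_{L^4_{t,x}}\lesssim\|u_0\|_{L^2}\|v_0\|_{L^2}$. I expect the main obstacle to be making the surface-area estimate rigorous, namely quantifying how the separation $M_1\ll M_2$ forces the resonance surface to be nearly flat inside the low-frequency ball so that its area is controlled by $M_1^{k-1}$; this is exactly the step that converts the transversality of the two dispersion relations into the dyadic gain.
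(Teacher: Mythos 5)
Your proof is correct, and it is essentially the paper's own argument in different clothes: both treat $M_1 \sim M_2$ by H\"older plus the $L^4_{t,x}$ Strichartz estimate, and for $M_1 \ll M_2$ both run a Fourier-side Cauchy--Schwarz whose gain is the ratio of the low-frequency support measure $O(M_1^{k-1})$ to the transversality factor $\big|k\big(|\eta|^{k-2}\eta - |\xi|^{k-2}\xi\big)\big| \sim M_2^{k-1}$. The paper implements this via duality and the change of variables $(\tau,\vartheta)=(-|\xi|^k-|\eta|^k,\ \xi+\eta)$, whose Jacobian $J\sim M_2^{k-1}$ is exactly your gradient lower bound and whose restriction $|\underline{\xi}|\lesssim M_1$ on the transverse parameters plays the role of your level-set area bound, so your coarea-formula computation and the paper's Jacobian computation are equivalent.
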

\begin{proof}
We refer the reader to \cite{Bourgain-98} for the standard case $k=2$. The proof for $k>2$ is treated similarly. For $M_1 \sim M_2$, the result follows easily from the Strichartz estimate,
\begin{align*}
\|[e^{it\Lambda^k} P_{M_1} u_0][e^{it\Lambda^k} P_{M_2} v_0]\|_{L^2_t L^2_x} \leq \|e^{it\Lambda^k} P_{M_1} u_0\|_{L^4_t L^4_x} \|e^{it\Lambda^k} P_{M_2} v_0\|_{L^4_t L^4_x} \lesssim \|u_0\|_{L^2_x} \|v_0\|_{L^2_x}. 
\end{align*}
Note that $(4,4)$ is an admissible pair. Let us consider the case $M_1 \ll M_2$. By duality, it suffices to prove 
\begin{multline}
\Big|\iint_{\R^k \times \R^k} G(-|\xi|^k - |\eta|^k, \xi+\eta) \widehat{P_{M_1} u}_0(\xi) \widehat{P_{M_2} v}_0(\eta) d\xi d\eta\Big| \\
\lesssim (M_1/M_2)^{(k-1)/2} \|G\|_{L^2_\tau L^2_\xi} \|\hat{u}_0\|_{L^2_\xi} \|\hat{v}_0\|_{L^2_\xi}. \label{equivalent bilinear}
\end{multline}
By renaming the components, we can assume that $|\xi|\sim |\xi_1| \sim M_1$ and $|\eta|\sim |\eta_1| \sim M_2$, where $\xi=(\xi_1, \underline{\xi}), \eta=(\eta_1, \underline{\eta})$ with $\underline{\xi}, \underline{\eta} \in \R^{k-1}$. We make a change of variables $\tau=-|\xi|^k-|\eta|^k, \vartheta= \xi+\eta$ and $d\tau d\vartheta = J d\xi_1 d\eta$. An easy computation shows that $J=|k(|\eta|^{k-2}\eta_1 - |\xi|^{k-2}\xi_1)| \sim |\eta|^{k-1} \sim M_2^{k-1}$. The Cauchy-Schwarz inequality with the fact that $|\underline{\xi}| \lesssim M$ then yields
\begin{align*}
\text{LHS}(\ref{equivalent bilinear}) &= \Big| \iiint_{\R \times \R^k \times \R^{k-1}} G(\tau, \vartheta) \widehat{P_{M_1} u}_0(\xi) \widehat{P_{M_2} v}_0(\eta) J^{-1} d\tau d\vartheta d\underline{\xi} \Big| \\
& \leq \|G\|_{L^2_\tau L^2_\xi} \int_{\R^{k-1}} \Big(\iint_{\R \times \R^k} |\widehat{P_{M_1} u}_0(\xi)|^2 |\widehat{P_{M_2} v}_0(\eta)|^2 J^{-2} d\tau d\vartheta \Big)^{1/2} d\underline{\xi} \\
&\leq \|G\|_{L^2_\tau L^2_\xi} M_1^{(k-1)/2} \Big( \iiint_{\R \times \R^k \times \R^{k-1}} |\widehat{P_{M_1} u}_0(\xi)|^2 |\widehat{P_{M_2} v}_0(\eta)|^2 J^{-2} d\tau d\vartheta d\underline{\xi} \Big)^{1/2} \\
&\leq \|G\|_{L^2_\tau L^2_\xi} M_1^{(k-1)/2} \Big( \iiint_{\R \times \R^k \times \R^{k-1}} |\widehat{P_{M_1} u}_0(\xi)|^2 |\widehat{P_{M_2} v}_0(\eta)|^2 J^{-1} d\xi d\eta \Big)^{1/2} \\
&\lesssim \|G\|_{L^2_\tau L^2_\xi} (M_1/M_2)^{(k-1)/2} \|\widehat{P_{M_1} u}_0\|_{L^2_\xi} \|\widehat{P_{M_2} v}_0\|_{L^2_\xi}.
\end{align*}
This proves $(\ref{equivalent bilinear})$, and the proof is complete.
\end{proof}
The following result is another application of Lemma $\ref{lem special property}$ and Proposition $\ref{prop bilinear}$.
\begin{coro} \label{coro bilinear Xsb}
Let $k\geq 2, k\in \Z$ and $u_1, u_2 \in X^{0,1/2+}$ be supported on spatial frequencies $|\xi| \sim M_1, M_2$ respectively. Then for $M_1 \leq M_2$, 
\begin{align}
\|u_1 u_2\|_{L^2_t L^2_x} \lesssim (M_1/M_2)^{(k-1)/2} \|u_1\|_{X^{0,1/2+}} \|u_2\|_{X^{0,1/2+}}. \label{bilinear estimate}
\end{align}
A similar estimate holds for $\overline{u}_1 u_2$ or $u_1 \overline{u}_2$. 
\end{coro}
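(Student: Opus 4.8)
The plan is to obtain \eqref{bilinear estimate} directly from its free-flow counterpart, Proposition \ref{prop bilinear}, via the transfer principle recorded in Lemma \ref{lem special property}. I would invoke the bilinear half of that lemma with $Y = L^2_t L^2_x$ and $\gamma_1 = \gamma_2 = 0$. Since $u_1$ and $u_2$ are supported on the spatial frequencies $|\xi| \sim M_1$ and $|\xi| \sim M_2$, the free profiles $f_1, f_2$ produced by the transfer representation of $u_1, u_2$ carry exactly these frequency supports, so I may treat them as $P_{M_1} f_1$ and $P_{M_2} f_2$ throughout.

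The hypothesis to be checked is then
\[
\|[e^{it\tau} e^{it\Lambda^k} P_{M_1} f_1][e^{it\zeta} e^{it\Lambda^k} P_{M_2} f_2]\|_{L^2_t L^2_x} \lesssim (M_1/M_2)^{(k-1)/2} \|f_1\|_{L^2_x} \|f_2\|_{L^2_x},
\]
uniformly over $\tau, \zeta \in \R$. The only point to notice is that the two modulations coalesce into a single factor $e^{it(\tau + \zeta)}$ of modulus one, which leaves the $L^2_t L^2_x$ norm unchanged. The left-hand side therefore equals the left-hand side of Proposition \ref{prop bilinear}, and the desired bound follows with a constant independent of $\tau$ and $\zeta$. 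Feeding this into Lemma \ref{lem special property} yields \eqref{bilinear estimate} for $u_1 u_2$.

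It remains to treat the conjugated products. Since $\|\overline{u}_1 u_2\|_{L^2_t L^2_x} = \|u_1 \overline{u}_2\|_{L^2_t L^2_x}$ by taking the overall conjugate, it suffices to handle $\overline{u}_1 u_2$. Here complex conjugation reverses the dispersion relation, so that $\overline{u}_1$ is naturally measured in the Bourgain space built on $\tau = -|\xi|^k$, with norm equal to $\|u_1\|_{X^{0,1/2+}}$; the matching transfer principle, proved exactly as Lemma \ref{lem special property}, calls for the mixed linear estimate
\[
\|[e^{-it\Lambda^k} P_{M_1} f_1][e^{it\Lambda^k} P_{M_2} f_2]\|_{L^2_t L^2_x} \lesssim (M_1/M_2)^{(k-1)/2} \|f_1\|_{L^2_x} \|f_2\|_{L^2_x}.
\]
Using the identity $e^{-it\Lambda^k} h = \overline{e^{it\Lambda^k} \overline{h}}$ together with the pointwise equality $|\overline{a}\, b| = |a\, b|$, the left-hand side is unchanged when the first factor is replaced by $e^{it\Lambda^k} P_{M_1} \overline{f}_1$, so this mixed estimate reduces verbatim to Proposition \ref{prop bilinear}. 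Alternatively one may rerun the duality argument of that proposition: the resonance surface becomes $\tau = |\eta|^k - |\xi|^k$ and its Jacobian in $\xi_1$ is $J = |k(|\eta|^{k-2}\eta_1 + |\xi|^{k-2}\xi_1)|$, which in the only nontrivial regime $M_1 \ll M_2$ is still dominated by $|\eta|^{k-1} \sim M_2^{k-1}$.

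I expect the essentially mechanical transfer step to cause no difficulty; the one place warranting care is precisely the conjugate case, where flipping a sign inside the phase might in principle spoil the lower bound on the Jacobian $J$. The frequency separation $M_1 \ll M_2$ removes this worry, since the high-frequency term $|\eta|^{k-1}$ controls $J$ irrespective of the relative signs of $\eta_1$ and $\xi_1$, and the conjugated bounds then follow on the same footing as \eqref{bilinear estimate}.
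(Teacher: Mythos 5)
Your proposal is correct and is exactly the paper's (largely implicit) argument: the paper proves the corollary by simply citing the transfer principle of Lemma \ref{lem special property} together with the free bilinear estimate of Proposition \ref{prop bilinear}, which is what you carry out, including the observation that the modulations $e^{it\tau}, e^{it\zeta}$ are harmless in $L^2_t L^2_x$. Your treatment of the conjugated products (reversed dispersion relation, the identity $e^{-it\Lambda^k}h=\overline{e^{it\Lambda^k}\overline{h}}$, and the check that the Jacobian remains $\sim M_2^{k-1}$ when $M_1\ll M_2$) fills in details the paper leaves unstated, and is accurate.
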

\subsection{$I$-operator}
For $0<\gamma <k/2$ and $N\gg 1$, we define the Fourier multiplier $I_N$ by
\begin{align}
\widehat{I_N f} (\xi):= m_N(\xi) \hat{f}(\xi), \label{I operator}
\end{align}
where $m$ is a smooth, radially symmetric, non-increasing function such that 
\begin{align}
m_N(\xi):= \left\{ 
\begin{array}{c l}
1 & \text{if } |\xi| \leq N, \\
(N^{-1}|\xi|)^{\gamma-2} & \text{if } |\xi|\geq 2N.
\end{array}
\right. \label{define m}
\end{align}
For simplicity, we shall drop the $N$ from the notation and write $I$ and $m$ instead of $I_N$ and $m_N$. The operator $I$ is the identity on low frequencies $|\xi|\leq N$ and behaves like a fractional integral operator of order $k/2-\gamma$ on high frequencies $|\xi|\geq 2N$. We recall some basic properties of the $I$-operator in the following lemma.
\begin{lem} \label{lem property I operator}
Let $q \in (1,\infty)$ and $\gamma \in (0,k/2)$. Then
\begin{align}
\|I f\|_{L^q_x} & \lesssim \|f\|_{L^q_x}, \label{Lp bound I operator} \\
\|f\|_{H^\gamma_x} \lesssim \|If\|_{H^{k/2}_x} &\lesssim N^{k/2-\gamma} \|f\|_{H^\gamma_x}. \label{sobolev bound I operator}
\end{align}
\end{lem}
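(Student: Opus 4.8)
The plan is to prove the two chains of inequalities in Lemma~\ref{lem property I operator} by reducing everything to the multiplier properties of $m=m_N$ together with standard Littlewood-Paley / Mikhlin-type reasoning. The first estimate \eqref{Lp bound I operator} is an $L^q$-boundedness statement for the Fourier multiplier $I$, so the natural tool is the Mikhlin-H\"ormander multiplier theorem: I would verify that $m(\xi)$ satisfies the symbol bounds $|\partial_\xi^\alpha m(\xi)| \lesssim |\xi|^{-|\alpha|}$ for all multi-indices $\alpha$ up to order $\lfloor k/2\rfloor + 1$. This follows because $m$ is smooth, radial, and equals a constant for $|\xi|\leq N$, while for $|\xi|\geq 2N$ it is a power $(N^{-1}|\xi|)^{\gamma-2}$, each derivative of which gains a factor $|\xi|^{-1}$; on the transition annulus $N\leq|\xi|\leq 2N$ the derivatives are controlled by negative powers of $N\sim|\xi|$. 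Once the Mikhlin condition holds, $I$ is bounded on $L^q_x$ for every $q\in(1,\infty)$ with a constant independent of $N$, which is exactly \eqref{Lp bound I operator}.

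For the Sobolev chain \eqref{sobolev bound I operator} the strategy is to decompose $f$ into Littlewood-Paley pieces and compare the symbols $\scal{\xi}^\gamma$, $\scal{\xi}^{k/2}m(\xi)$, and $N^{k/2-\gamma}\scal{\xi}^\gamma$ frequency band by frequency band. On low frequencies $|\xi|\lesssim N$, where $m\equiv 1$, one has $\scal{\xi}^{k/2}m(\xi)=\scal{\xi}^{k/2}\geq\scal{\xi}^\gamma$ since $\gamma<k/2$, giving the lower bound $\|f\|_{H^\gamma_x}\lesssim\|If\|_{H^{k/2}_x}$ there; simultaneously $\scal{\xi}^{k/2}\lesssim N^{k/2-\gamma}\scal{\xi}^\gamma$ because $\scal{\xi}^{k/2-\gamma}\lesssim N^{k/2-\gamma}$ in that range, which supplies the upper bound. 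On high frequencies $|\xi|\gtrsim N$, where $m(\xi)\sim(N^{-1}|\xi|)^{\gamma-2}$, I would compute the ratio of symbols: the multiplier for $If$ in $H^{k/2}$ becomes comparable to $|\xi|^{k/2}(N^{-1}|\xi|)^{\gamma-2}$. Here I expect a correction, since with the stated exponent $(N^{-1}|\xi|)^{\gamma-2}$ the high-frequency symbol $|\xi|^{k/2+\gamma-2}N^{2-\gamma}$ must be matched against $|\xi|^\gamma$ and $N^{k/2-\gamma}|\xi|^\gamma$; the intended meaning is that $m$ behaves like a fractional integral of order $k/2-\gamma$, so the comparison gives $\scal{\xi}^{k/2}m(\xi)\gtrsim\scal{\xi}^\gamma$ from below and $\scal{\xi}^{k/2}m(\xi)\lesssim N^{k/2-\gamma}\scal{\xi}^\gamma$ from above, precisely because $m(\xi)\leq 1$ forces the upper comparison and $m(\xi)\geq(N^{-1}|\xi|)^{-(k/2-\gamma)}$ is what the "order $k/2-\gamma$" statement encodes.

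I would then assemble the two ranges: squaring, integrating in $\xi$, and using Plancherel turns the pointwise symbol comparisons into the claimed $L^2$-based Sobolev norm inequalities, with the uniform constants absorbed into $\lesssim$ and the single factor $N^{k/2-\gamma}$ appearing only in the upper bound. The main obstacle, and the point needing the most care, is the high-frequency comparison and in particular making the exponent bookkeeping consistent with the definition of $m$ in \eqref{define m}: the text describes $I$ as behaving "like a fractional integral operator of order $k/2-\gamma$," which is the property one actually uses, so I would state the comparison in terms of that order and check that the monotonicity and smoothness of $m$ (it is non-increasing with $m(\xi)\in(0,1]$) guarantee both directions of the inequality uniformly across the transition region $N\leq|\xi|\leq 2N$. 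The low-frequency and transition regions are routine once the symbol bounds are in hand, so the proof is essentially a careful frequency-localized comparison of multipliers followed by Plancherel.
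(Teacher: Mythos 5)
Your proposal is correct and follows essentially the same route as the paper: the H\"ormander--Mikhlin multiplier theorem gives \eqref{Lp bound I operator}, and \eqref{sobolev bound I operator} is obtained by comparing the symbols $\scal{\xi}^{\gamma}$, $\scal{\xi}^{k/2}m(\xi)$ and $N^{k/2-\gamma}\scal{\xi}^{\gamma}$ on the regions $|\xi|\lesssim N$ and $|\xi|\geq 2N$ and then applying Plancherel. Your observation that the exponent $\gamma-2$ in \eqref{define m} must be read as $\gamma-k/2$ (so that $I$ is a fractional integral of order $k/2-\gamma$ on high frequencies) is exactly the correction the paper's own computation makes implicitly.
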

\begin{proof}
The estimate $(\ref{Lp bound I operator})$ follows from the fact that $m$ satisfies the H\"ormander multiplier condition. For $(\ref{sobolev bound I operator})$, we proceed as follows. 
\begin{align*}
\|f\|^2_{H^\gamma_x} &\lesssim \int_{|\xi| \leq N} \scal{\xi}^{2\gamma} |\widehat{I f}(\xi)|^2 d\xi + \int_{|\xi|\geq 2N} \scal{\xi}^{2\gamma} (N^{-1}|\xi|)^{2(k/2-\gamma)} |\widehat{I f}(\xi)|^2 d\xi \\
& \lesssim \int_{|\xi| \leq N} \scal{\xi}^k |\widehat{I f}(\xi)|^2 d\xi + \int_{|\xi|\geq 2N} \scal{\xi}^k |\widehat{I f}(\xi)|^2 d\xi \lesssim \|If\|^2_{H^{k/2}_x}.
\end{align*}
This gives the first estimate in $(\ref{sobolev bound I operator})$. Similarly,
\begin{align*}
\|I f\|^2_{H^{k/2}_x} &\lesssim \int_{|\xi| \leq N} \scal{\xi}^k |\hat{f}(\xi)|^2 d\xi + \int_{|\xi|\geq 2N} \scal{\xi}^k (N^{-1}|\xi|)^{2(\gamma-k/2)} |\hat{f}(\xi)|^2d\xi \\
&\lesssim \int_{|\xi|\leq N} \scal{\xi}^{2(k/2-\gamma)} \scal{\xi}^{2\gamma} |\hat{f}(\xi)|^2 d\xi + \int_{|\xi|\geq 2N} N^{2(k/2-\gamma)} \scal{\xi}^{2\gamma} |\hat{f}(\xi)|^2 d\xi \\
& \lesssim N^{2(k/2-\gamma)} \Big(\int_{|\xi|\leq N} \scal{\xi}^{2\gamma}|\hat{f}(\xi)|^2 d\xi + \int_{|\xi|\geq 2N}\scal{\xi}^{2\gamma}|\hat{f}(\xi)|^2 d\xi \Big) \lesssim N^{2(k/2-\gamma)} \|f\|^2_{H^{\gamma}_x}.
\end{align*}
The proof is complete.
\end{proof}
\section{Almost conservation law} \label{section almost conservation law}
As mentioned in the introduction, the equation $(\Ntext)$ is locally well-posed in $H^\gamma$ for any $\gamma>0$. Moreover, the time of existence depends only on the $H^\gamma_x$-norm of the initial data. Thus, the global well-posedness will follows from a global $L^\infty_t H^\gamma_x$ bound of the solution by the usual iterative argument. For $H^\gamma$ solution with $\gamma\geq k/2$, one can obtain easily the $L^\infty_tH^\gamma_x$ bound of solution using the persistence of regularity and the conserved quantities of mass and energy. But it is not the case for $H^\gamma$ solution with $\gamma<k/2$ since the energy is no longer conserved. However, it follows from $(\ref{sobolev bound I operator})$ that the $H^\gamma_x$-norm of the solution $u$ can be controlled by the $H^{k/2}_x$-norm of $Iu$. It leads to consider the following modified energy functional
\begin{align}
E(Iu(t)):= \frac{1}{2} \|Iu(t)\|^2_{\dot{H}^{k/2}_x} + \frac{1}{4}\|Iu(t)\|^4_{L^4_x}. \label{modified energy functional}
\end{align}
Since $Iu$ is not a solution to $(\Ntext)$, we can expect an energy increment. We have the following ``almost conservation law''. 
\begin{prop} \label{prop almost conservation law}
Let $k\geq 3, k \in \Z$. Given $k/2>\gamma>\gamma(k):=\frac{k(4k-1)}{14k-3}$, $N\gg 1$, and initial data $u_0 \in C^\infty(\R^k)$ with $E(Iu_0)\leq 1$, then there exists a $\delta=\delta(\|u_0\|_{L^2_x})>0$ so that the solution $u \in C([0,\delta],H^\gamma(\R^k))$ of $(\Nemph)$ satisfies
\begin{align}
E(Iu(t)) = E(Iu_0) + O(N^{-\gamma_0(k)+}), \label{almost conservation law}
\end{align}
where $\gamma_0(k):= \frac{k(6k-1)}{8k-2}$ for all $t\in [0,\delta]$.
\end{prop}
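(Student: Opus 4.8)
The plan is to run the standard $I$-method energy-increment argument, adapted to the order-$k$ dispersion. First I would apply the multiplier $I$ to $(\Ntext)$; since $I$ commutes with $\partial_t$ and with $\Lambda^k$, the function $v:=Iu$ solves $i\partial_t v + \Lambda^k v = -I(|u|^2 u)$. By the modified local well-posedness established earlier in this section, on the interval $[0,\delta]$ one has the a priori bound $\|Iu\|_{X^{k/2,1/2+}([0,\delta])} \lesssim 1$, where $\delta = \delta(\|u_0\|_{L^2_x})$; here I use $E(Iu_0)\le 1$ together with mass conservation to control $\|Iu_0\|_{H^{k/2}_x}\lesssim 1$. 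This $X^{k/2,1/2+}$ bound is the input for all the multilinear estimates below, feeding the Strichartz and bilinear Strichartz estimates of Corollary \ref{coro linear Xsb} and Corollary \ref{coro bilinear Xsb}.

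Next I would differentiate the modified energy. Writing $E'(v)=\Lambda^k v + |v|^2 v$ for the gradient of the energy, the Hamiltonian structure gives $\mathrm{Re}\,i\int \overline{E'(v)}\,E'(v)\,dx = 0$, so only the discrepancy between $I(|u|^2u)$ and $|Iu|^2 Iu$ survives:
\begin{align}
\frac{d}{dt}E(Iu) = \mathrm{Re}\,i\int_{\R^k}\big(\Lambda^k\overline{Iu} + |Iu|^2\overline{Iu}\big)\big(I(|u|^2u) - |Iu|^2 Iu\big)\,dx. \nonumber
\end{align}
Integrating over $[0,\delta]$ reduces the claim to showing that the resulting spacetime integral is $O(N^{-\gamma_0(k)+})$. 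The sextic contribution coming from the $|Iu|^2\overline{Iu}$ factor is lower order and I would dispose of it by Strichartz and H\"older; the heart of the matter is the quartic term $\mathrm{Re}\,i\int_0^\delta\int \Lambda^k\overline{Iu}\,(I(|u|^2u)-|Iu|^2Iu)\,dx\,dt$.

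Passing to the spatial Fourier side, this quartic term becomes a $4$-linear form on the hyperplane $\xi_1-\xi_2+\xi_3-\xi_4=0$ (so that $\xi_4 = \xi_1-\xi_2+\xi_3$) with symbol
\begin{align}
M(\xi_1,\xi_2,\xi_3,\xi_4) = |\xi_4|^k\, m(\xi_4)\,\big(m(\xi_4) - m(\xi_1)m(\xi_2)m(\xi_3)\big). \nonumber
\end{align}
The decisive structural fact is that $M\equiv 0$ when all four frequencies are $\lesssim N$, since there $m\equiv 1$; and on dyadic blocks $|\xi_j|\sim N_j$ the smoothness and monotonicity of $m$ (in particular $|\nabla m(\xi)|\lesssim m(\xi)/|\xi|$) yield, via the mean value theorem on the constraint hyperplane, a pointwise bound on $m(\xi_4)-m(\xi_1)m(\xi_2)m(\xi_3)$ that gains negative powers of the largest frequency relative to the trivial size $N_{\max}^k$. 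After a Littlewood-Paley decomposition I would order $N_1\geq N_2\geq N_3\geq N_4$ and note that the frequency constraint forces the two largest to be comparable and $\gtrsim N$ for $M$ to be nonzero. On each block I would pair each of the two highest-frequency factors with one of the two lowest and apply the bilinear estimate $(\ref{bilinear estimate})$ to each product; this is favorable precisely because pairing high with low frequencies produces the smoothing gain $(N_{\text{low}}/N_{\text{high}})^{(k-1)/2}$, whereas pairing the two comparable high frequencies would waste it. Combined with the symbol bound and the normalization $\|Iu\|_{X^{k/2,1/2+}}\lesssim 1$, this turns the block estimate into a product of geometric factors in the $N_j$.

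Finally I would sum over the dyadic frequencies. The summability, together with the constraint $N_1\sim N_2\gtrsim N$, produces a net negative power of $N$; optimizing the gains from the symbol and from the two bilinear factors is exactly what pins down the exponent $\gamma_0(k)=\frac{k(6k-1)}{8k-2}$, with the $+$ absorbing the $\epsilon$ losses from the $1/2+$ in the $X^{k/2,1/2+}$ norms and the logarithmic loss from the dyadic summation. The main obstacle is precisely this multilinear estimate: obtaining the \emph{sharp} decay $N^{-\gamma_0(k)+}$ requires a careful case analysis over the relative sizes of $N_1,\dots,N_4$ (including the resonant configurations where several frequencies are comparable) and a delicate interplay between the derivative loss $|\xi_4|^k$, the smoothing exponent $(k-1)/2$ coming from the order-$k$ dispersion in $(\ref{bilinear estimate})$, and the cancellation encoded in $M$; balancing all of these without losing the exponent is the crux of the argument.
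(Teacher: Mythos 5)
Your skeleton matches the paper's own proof: modified local well-posedness giving $\|Iu\|_{X^{k/2,1/2+}_\delta}\lesssim 1$, the energy increment written as a multilinear form carrying the commutator multiplier, dyadic decomposition, a mean value theorem bound on the multiplier, and high-low pairings via the bilinear estimate $(\ref{bilinear estimate})$. (Your splitting of $\frac{d}{dt}E(Iu)$ differs from the paper's only by a term whose integrand is $i\,|I(|u|^2u)-|Iu|^2Iu|^2$, which has zero real part, so the two decompositions are equivalent.) However, there are two genuine gaps. The first concerns the sextic term, which you declare ``lower order'' and propose to dispose of by Strichartz and H\"older alone. That step fails as stated: without exploiting the vanishing of the multiplier (which forces the second-largest frequency among $\xi_2,\xi_3,\xi_4$ to be $\gtrsim N$) and without tracking the weights $m(M_j)$ across dyadic blocks, Strichartz and H\"older give only an $O(1)$ bound, with no decay in $N$ whatsoever. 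In the paper the corresponding term ($\text{Term}_2$) is not lower order at all: it requires Lemma $\ref{lem term 2}$ together with its own case analysis $(\ref{term 2})$ on the dyadic weights, and it decays exactly at the rate $N^{-\gamma_0(k)+}$, i.e.\ it is one of the contributions that saturates the exponent of the proposition.

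The second, more fundamental gap is that you never derive $\gamma_0(k)$; you explicitly defer ``balancing all of these without losing the exponent'' as ``the crux,'' but that crux is precisely what the proposition asserts. Moreover, the exponent is not obtained by optimizing the bilinear gains alone, as you suggest. The cleanest high-low regime (the paper's Case 2, $M_2\gtrsim N\gg M_3\geq M_4$) in fact gives the \emph{better} rate $N^{-(k-1/2)+}$; the binding constraint comes from the high-high interactions (Case 3) and from the sextic term, where one must compare weights using the monotonicity of $\lambda\mapsto m(\lambda)\lambda^{\alpha}$, which holds only for $\alpha>k/2-\gamma$ (see $(\ref{property of m})$). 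This couples the achievable decay $\gamma_0(k)=k-\alpha(k)$ to the regularity $\gamma$ itself, and the self-consistent choice $\alpha(k)=\frac{k}{2}-\frac{k^2}{4k-1}=\frac{k(2k-1)}{8k-2}$ (Remark $\ref{rem comment on gamma}$) is what produces $\gamma_0(k)=\frac{k(6k-1)}{8k-2}$ and, downstream, the threshold $\gamma(k)=\frac{k(4k-1)}{14k-3}$ in the hypothesis. Without carrying out this weight comparison and the accompanying case analysis, the specific exponent in $(\ref{almost conservation law})$ remains unsupported.
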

\begin{rem} \label{rem almost conservation law}
This proposition tells us that the modified energy $E(Iu(t))$ decays with respect to the parameter $N$. We will see in Section $\ref{section global proof}$ that if we can replace the increment $N^{-\gamma_0(k)+}$ in the right hand side of $(\ref{almost conservation law})$ with $N^{-\gamma_1(k)+}$ for some $\gamma_1(k)>\gamma_0(k)$, then the global existence can be improved for all $\gamma>\frac{k^2}{2(k+\gamma_1(k))}$. In particular, if $\gamma_1(k)=\infty$, then $E(Iu(t))$ is conserved, and the global well-posedness holds for all $\gamma>0$. 
\end{rem}
In order to prove Proposition $\ref{prop almost conservation law}$, we recall the following interpolation result (see \cite[Lemma 12.1]{CoKeStaTaTao-multilinear}). Let $\eta$ be a smooth, radial, decreasing function which equals 1 for $|\xi| \leq 1$ and equals $|\xi|^{-1}$ for $|\xi|\geq 2$. For $N\geq 1$ and $\alpha \in \R$, we define the spatial Fourier multiplier $J^\alpha_N$ by
\begin{align}
\widehat{J^\alpha_N f}(\xi):= (\eta(N^{-1}\xi))^\alpha \hat{f}(\xi). \label{J operator}
\end{align}
The operator $J^\alpha_N$ is a smoothing operator of order $\alpha$, and it is the identity on the low frequencies $|\xi|\leq N$. 
\begin{lem}[Interpolation \cite{CoKeStaTaTao-multilinear}] \label{lem interpolation}
Let $\alpha_0>0$ and $ n\geq 1$. Suppose that $Z, X_1,..., X_n$ are translation invariant Banach spaces and $T$ is a translation invariant $n$-linear operator such that 
\[
\|J^\alpha_1 T(u_1,...,u_n)\|_Z \lesssim \prod_{i=1}^n \|J^\alpha_1 u_i\|_{X_i},
\]
for all $u_1,...,u_n$ and all $0 \leq \alpha \leq \alpha_0$. Then one has
\[
\|J^\alpha_N T(u_1,...,u_n)\|_Z \lesssim \prod_{i=1}^n \|J^\alpha_N u_i\|_{X_i},
\]
for all $u_1,...,u_n$, all $0\leq \alpha\leq \alpha_0$, and $N\geq 1$, with the implicit constant independent of $N$. 
\end{lem}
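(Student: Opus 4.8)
\section*{Proof proposal}

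The plan is to strip the statement down to a single explicit Fourier multiplier and then exploit translation invariance. Working first with Schwartz inputs, I would substitute $v_i := J^\alpha_N u_i$, so that the desired estimate becomes $\|\mathcal{U}(v_1,\dots,v_n)\|_Z \lesssim \prod_i \|v_i\|_{X_i}$ for the $n$-linear operator $\mathcal{U}(v) := J^\alpha_N T(J^{-\alpha}_N v_1,\dots,J^{-\alpha}_N v_n)$, while the hypothesis at the \emph{same} $\alpha$ (and at $\alpha=0$) asserts the analogous boundedness of $\mathcal{U}_1(v) := J^\alpha_1 T(J^{-\alpha}_1 v_1,\dots,J^{-\alpha}_1 v_n)$. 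Both are translation-invariant $n$-linear multiplier operators, so the symbol of $\mathcal{U}$ equals $p$ times the symbol of $\mathcal{U}_1$, where
\[
p(\xi_1,\dots,\xi_n) := \Big(\frac{\eta(N^{-1}\sigma)}{\eta(\sigma)}\Big)^{\alpha}\prod_{i=1}^{n}\Big(\frac{\eta(\xi_i)}{\eta(N^{-1}\xi_i)}\Big)^{\alpha}, \qquad \sigma := \xi_1+\cdots+\xi_n .
\]
I would then invoke the elementary principle that on any translation-invariant Banach space a multiplier whose symbol is the Fourier transform $\hat{\mu}$ of a finite measure $\mu$ is bounded with norm $\le \|\mu\|$, since it is realized as the average of translates $f \mapsto \int \tau_y f\, d\mu(y)$. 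Hence it suffices to show $p = \hat{\mu}$ for a measure on $(\R^k)^n$ with total variation $\lesssim 1$, uniformly in $N \ge 1$ and $\alpha \in [0,\alpha_0]$: granting this, the multilinear averaging gives $\|\mathcal{U}\| \le \|\mu\|\,\|\mathcal{U}_1\| \lesssim 1$.

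The first substantive step is the pointwise bound $|p| \lesssim 1$ on the support $\sigma = \sum_i \xi_i$. Writing $g(\zeta) := (\eta(N^{-1}\zeta)/\eta(\zeta))^{\alpha} \sim \min(\scal{\zeta},N)^{\alpha}$ and $h(\xi) := (\eta(\xi)/\eta(N^{-1}\xi))^{\alpha} \sim \min(\scal{\xi},N)^{-\alpha}$, so that $g(\zeta)h(\zeta)=1$ and $0 < h \le 1 \le g$, the relation $|\sigma| \le \sum_i |\xi_i| \lesssim \max_i |\xi_i|$ together with the monotonicity and doubling of $\min(\scal{\cdot},N)$ yields $g(\sigma) \lesssim \max_i g(\xi_i)$, whence $p = g(\sigma)\prod_j h(\xi_j) \lesssim \max_i \prod_{j\ne i} h(\xi_j) \le 1$. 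I would then localize using a smooth partition of unity $\{\chi_i\}_{i=1}^n$, homogeneous of degree zero, with $\chi_i$ supported where $|\xi_i| \ge \tfrac12 \max_j |\xi_j|$ (so that $|\sigma| \lesssim |\xi_i|$ on $\mathrm{supp}\,\chi_i$), and write
\[
p = \sum_{i=1}^{n} \underbrace{\chi_i\, g(\sigma)\, h(\xi_i)}_{=:\,b_i}\;\prod_{j\ne i} h(\xi_j).
\]

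The heart of the argument is to exhibit each summand as a bounded measure. The tensor factor $\prod_{j\ne i} h(\xi_j)$ is the Fourier transform of $\bigotimes_{j\ne i} \check{h}$; since $\eta \sim \scal{\cdot}^{-1}$, the symbol $h$ is a frequency-$N$ truncation of a Bessel potential of order $\alpha$, whose kernel $\check{h}$ lies in $L^1(\R^k)$ with $\|\check{h}\|_{L^1} \lesssim 1$ uniformly in $N$ and $\alpha\in[0,\alpha_0]$, so this factor is a measure of total variation $\lesssim 1$. The delicate factor is $b_i = \chi_i\, g(\sigma)\, h(\xi_i)$: it roughens the output frequency by order $\alpha$, but only on the region $|\sigma| \lesssim |\xi_i|$, where this is compensated by the order-$\alpha$ smoothing of the $i$-th input. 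I would prove $b_i = \hat{\mu}_i$ with $\|\mu_i\| \lesssim 1$ by an explicit kernel estimate, decomposing dyadically in $|\xi_i| \sim |\sigma| \sim 2^\ell$ and estimating the $L^1$-norm of each dyadic kernel via the Bessel bounds and the frequency separation afforded by $\chi_i$, then summing a geometric series in $\ell$.

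The main obstacle is exactly the control of $b_i$, that is, the high-high-to-low frequency interaction: one must upgrade the heuristic ``the smoothing of the largest input absorbs the roughening of the smaller output'' from the mere pointwise bound $|p|\lesssim 1$ (or a Mikhlin/Coifman--Meyer $L^p$ bound, which would not suffice on an abstract translation-invariant space) to a genuine bound on the total variation of the associated measure, and to do so uniformly in the truncation scale $N$. Two structural features should keep the constants $N$-independent and make the plan succeed: the restriction $\alpha \le \alpha_0 < \infty$, which bounds the $L^1$-norms of the Bessel-type kernels; and the fact that $\eta$, hence $\eta(N^{-1}\cdot)^{\alpha}$, has a transition region of fixed width on the logarithmic scale, so that only boundedly many dyadic scales near $|\xi|\sim N$ contribute nontrivially and the scale sums converge geometrically independently of $N$.
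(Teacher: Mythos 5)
First, a point of reference: the paper does not prove this lemma at all --- it is quoted verbatim, with a citation, from Lemma 12.1 of \cite{CoKeStaTaTao-multilinear} --- so your proposal can only be measured against the original argument, not against anything in this paper. Your general framework is fine as far as it goes: the substitution $v_i=J^\alpha_N u_i$, the identification of the comparison symbol $p$, the principle that a multiplier whose symbol is the Fourier--Stieltjes transform of a measure $\mu$ acts on any translation-invariant space with norm $\leq \|\mu\|$, and the pointwise bound $|p|\lesssim 1$ are all correct. The fatal problem is at precisely the step you call the heart of the argument: the claim that $b_i=\chi_i\, g(\sigma)h(\xi_i)$ is the transform of a measure $\mu_i$ with $\|\mu_i\|\lesssim 1$ uniformly in $N$ and $\alpha\in[0,\alpha_0]$ is \emph{false}, not merely unproven. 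Note first that on $\mathrm{supp}\,\chi_i$ one only has $|\sigma|\lesssim|\xi_i|$, not $|\sigma|\sim|\xi_i|$, and on every dyadic block $\scal{\sigma}\sim\scal{\xi_i}\sim 2^\ell$ --- whether $2^\ell\leq N$ or $2^\ell\geq N$ --- one has $g(\sigma)h(\xi_i)\sim\big(\min(\scal{\sigma},N)/\min(\scal{\xi_i},N)\big)^{\alpha}\sim 1$: there is no decay along this diagonal. Since the measure norm is dilation invariant and each such block is essentially a rescaled copy of a fixed nonzero bump, every block costs $\gtrsim 1$, so your ``geometric series in $\ell$'' is in fact a divergent sum of $O(1)$ terms. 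Worse, no resummation can rescue the claim: fix $\alpha>0$ and let $N\to\infty$; then $b_i$ converges boundedly and pointwise to $\chi_i(\xi)\big(\eta(\xi_i)/\eta(\sigma)\big)^{\alpha}$, whose radial limits at infinity give the nonconstant zero-homogeneous profile $\chi_i(\theta)\big(|\sigma(\theta)|/|\theta_i|\big)^{\alpha}$. A standard dilation/weak-$*$ argument (test against Schwartz functions and pass to a weak-$*$ limit of the pushforward measures) shows that no bounded symbol with a nonconstant homogeneous profile at infinity is the Fourier transform of a finite measure --- its transform would have to be homogeneous off the origin yet continuous at the origin --- and a second weak-$*$ limit in $N$ then forces $\sup_{N}\|\mu_i\|=\infty$. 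The same mechanism kills uniformity as $\alpha\to 0$, since $b_i\to\chi_i$ and the sharp angular cutoff $\chi_i$ is itself not a Fourier--Stieltjes transform (measures convolve boundedly on $L^1$, while $\chi_i$ generates a genuinely singular operator).

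The diagnosis is that your decomposition destroys the only cancellation in the problem. The full symbol $p$ is \emph{exactly} $1$ when all but one frequency vanish, and exactly the constant $N^{(1-n)\alpha}$ on the region where $|\sigma|,|\xi_1|,\dots,|\xi_n|\geq 2N$; it is $p$ minus such constants, never the frozen pieces $\chi_i\,g(\sigma)h(\xi_i)$ with the damping factors $h(\xi_j)$, $j\neq i$, discarded, that has any decay, and even then the naive dyadic bounds degenerate like $(1-2^{-\alpha})^{-1}$ as $\alpha\to 0$, so a uniform single-exponent measure bound for $p$ is at best very delicate. There is also a structural red flag you should have noticed: your argument invokes the hypothesis only at the single exponent $\alpha$ (and at $0$), whereas the lemma assumes it for \emph{every} $\alpha\in[0,\alpha_0]$. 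That quantifier is exactly what the proof of Lemma 12.1 in \cite{CoKeStaTaTao-multilinear} exploits: in intermediate regimes such as $|\xi|\sim N^{\theta}$, $0<\theta<1$, the operator $J^\alpha_N$ acts like the identity while $J^\alpha_1$ smooths by $\theta\alpha$ derivatives' worth, so matching the two scales genuinely requires the whole family of estimates at intermediate orders; a fixed-$\alpha$ transference by bounded measures, as you propose, cannot see this and, by the computation above, provably fails.
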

Using this interpolation lemma, we are able to prove the following modified version of the usual local well-posed result. 
\begin{prop} \label{prop modified local well-posedness}
Let\footnote{see Theorem \ref{theorem global existence} for the definition of $\gamma(k)$.} $\gamma \in (\gamma(k),k/2)$ and $u_0 \in H^\gamma(\R^k)$ be such that $E(Iu_0) \leq 1$. Then there is a constant $\delta=\delta(\|u_0\|_{L^2_x})$ so that the solution $u$ to $(\Nemph)$ satisfies
\begin{align}
\|Iu\|_{X^{k/2,1/2+}_\delta} \lesssim 1. \label{modified local well-posedness}
\end{align} 
Here $X^{\gamma,b}_\delta$ is the space of restrictions of elements of $X^{\gamma,b}$ endowed with the norm
\begin{align}
\|u\|_{X^{\gamma,b}_\delta}:= \inf \{ \|w\|_{X^{\gamma,b}} \ | \ w_{\vert [0,\delta]\times \R^k} = u\}. \label{restriction Xsb norm}
\end{align} 
\end{prop}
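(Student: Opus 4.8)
The plan is to solve $(\Ntext)$ by a contraction argument measured in the $I$-weighted restriction norm $\|I\,\cdot\,\|_{X^{k/2,1/2+}_\delta}$. Since $I$ is a Fourier multiplier it commutes with $e^{it\Lambda^k}$, so for the solution map
\[
\Phi(u)(t):=e^{it\Lambda^k}u_0+i\int_0^t e^{i(t-s)\Lambda^k}|u|^2u(s)\,ds
\]
one has $I\Phi(u)=e^{it\Lambda^k}Iu_0+i\int_0^t e^{i(t-s)\Lambda^k}I(|u|^2u)\,ds$. I would first note that $E(Iu_0)\le1$ controls the data through $\|u_0\|_{L^2_x}$ alone: $\|Iu_0\|_{\dot H^{k/2}_x}^2\le2E(Iu_0)\le2$ while $\|Iu_0\|_{L^2_x}\le\|u_0\|_{L^2_x}$ since $m\le1$, whence $\|Iu_0\|_{H^{k/2}_x}\lesssim1+\|u_0\|_{L^2_x}=:R$. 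The standard homogeneous and Duhamel estimates for the restriction spaces (\ref{restriction Xsb norm}) then give, for some $\theta>0$,
\[
\|I\Phi(u)\|_{X^{k/2,1/2+}_\delta}\lesssim\|Iu_0\|_{H^{k/2}_x}+\delta^\theta\big\|I(|u|^2u)\big\|_{X^{k/2,-1/2+}_\delta},
\]
the factor $\delta^\theta$ coming from the time cutoff, using a slight surplus in the temporal exponents that the bilinear estimate below supplies.

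The heart of the matter is the trilinear estimate $\big\|I(u_1\overline{u_2}u_3)\big\|_{X^{k/2,-1/2+}}\lesssim\prod_{i=1}^3\|Iu_i\|_{X^{k/2,1/2+}}$, which I would establish in two stages. First, the symbol $m=m_N$ is, up to a symbol bounded uniformly in $N$, equal to $(\eta(N^{-1}\cdot))^{k/2-\gamma}$ (matching the order $k/2-\gamma$ of the fractional integration in (\ref{sobolev bound I operator})), so $I$ agrees with $J^{k/2-\gamma}_N$ of (\ref{J operator}). Applying the interpolation Lemma \ref{lem interpolation} to $T(u_1,u_2,u_3)=u_1\overline{u_2}u_3$ with $Z=X^{k/2,-1/2+}$ and $X_i=X^{k/2,1/2+}$ reduces matters to the estimate for $J^\alpha_1$ with $0\le\alpha\le\alpha_0:=k/2-\gamma$. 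As $J^\alpha_1$ has symbol $\sim\scal{\xi}^{-\alpha}$, this $N=1$ bound is exactly the unweighted trilinear estimate
\[
\|u_1\overline{u_2}u_3\|_{X^{s,-1/2+}}\lesssim\prod_{i=1}^3\|u_i\|_{X^{s,1/2+}},\qquad s=\tfrac{k}{2}-\alpha\in[\gamma,\tfrac{k}{2}],
\]
whose decisive, lowest-regularity instance is $s=\gamma$.

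To prove the unweighted estimate I would dualize, bounding its left side by the supremum over $\|u_4\|_{X^{-s,1/2-}}\le1$ of $\big|\iint u_1\overline{u_2}u_3\overline{u_4}\,dt\,dx\big|$, and then run a Littlewood--Paley decomposition into dyadic frequencies $N_1,N_2,N_3$ and $N_0$ of the four factors. On the integrand the relation $\xi_1-\xi_2+\xi_3-\xi_4=0$ forces the two largest of these frequencies to be comparable; after ordering, the plan is to split the four factors into two pairs, bound each pair in $L^2_{t,x}$ by Corollary \ref{coro bilinear Xsb}, pairing highest with lowest so as to extract the largest gain $(M_{\min}/M_{\max})^{(k-1)/2}$, and close with Cauchy--Schwarz. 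Summing the resulting dyadic series asks the bilinear gains to dominate the Sobolev weights $N_i^{s}$; I expect the main obstacle to be precisely the high-high-to-low interaction, where two comparable high frequencies produce a low output and the gain is weakest, for it is this configuration that pins down the admissible range of $s$, and hence of $\gamma$.

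Granting the trilinear estimate, the argument closes. On the ball $B_R=\{u:\|Iu\|_{X^{k/2,1/2+}_\delta}\le2CR\}$ the displayed inequality gives $\|I\Phi(u)\|_{X^{k/2,1/2+}_\delta}\le CR+C\delta^\theta(2CR)^3$, together with a matching bound for the difference $I(\Phi(u)-\Phi(v))$. Choosing $\delta=\delta(\|u_0\|_{L^2_x})$ so small that $C\delta^\theta(2CR)^2\le\tfrac12$ makes $\Phi$ a contraction of $B_R$ into itself, and its fixed point, the solution $u$, satisfies $\|Iu\|_{X^{k/2,1/2+}_\delta}\le2CR\lesssim1$, the implicit constant and $\delta$ depending only on $\|u_0\|_{L^2_x}$ through $R$.
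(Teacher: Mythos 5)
Your skeleton is exactly the paper's: Duhamel in the restriction spaces $(\ref{restriction Xsb norm})$, reduction of the $I$-weighted trilinear estimate to an unweighted one at regularity $s\in[\gamma,k/2]$ via the interpolation Lemma $\ref{lem interpolation}$ (using that $I=J^{k/2-\gamma}_N$), and a fixed-point/continuity argument producing $\delta=\delta(\|u_0\|_{L^2_x})$. The divergence is in the one place where real work is needed, namely the unweighted trilinear estimate $(\ref{interpolation application 1})$. The paper proves it softly: by duality and H\"older with the splitting $L^4_{t,x}\times L^4_{t,x}\times L^6_{t,x}\times L^3_{t,x}$, each factor controlled by the linear Strichartz bound of Corollary $\ref{coro linear Xsb}$ plus Sobolev embedding and an interpolation in the $b$-exponent for the dual factor; the only regularity requirement this creates is $\gamma>k/6$, which holds automatically since $\gamma>\gamma(k)>k/6$. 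You propose instead a dyadic decomposition with bilinear Strichartz pairings via Corollary $\ref{coro bilinear Xsb}$.

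The gap is that you never carry this out: you state the plan and explicitly defer its decisive step (``I expect the main obstacle to be precisely the high-high-to-low interaction\ldots''), so the heart of the proof is asserted, not proved. Two concrete points. First, your diagnosis of the obstacle is off. In the dual form, the weight you can afford on a dyadic block with $N_1\sim N_2=N\gg N_3,N_0$ is $N_1^sN_2^sN_3^sN_0^{-s}=N^{2s}(N_3/N_0)^s$, while your pairings produce the factor $(N_3N_0/N^2)^{(k-1)/2}$; since $1\leq N_0,N_3\lesssim N$, the required inequality $(N_3N_0/N^2)^{(k-1)/2}\lesssim N^{2s}(N_3/N_0)^s$ holds for every $s\geq0$, so the high-high-to-low case is harmless and does not pin down the range of $\gamma$ at all --- in this proposition the constraint $\gamma>\gamma(k)$ is inherited from the almost conservation law (Proposition $\ref{prop almost conservation law}$ and Remark $\ref{rem comment on gamma}$), not from the local theory. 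Second, and more serious for your method: Corollary $\ref{coro bilinear Xsb}$ requires \emph{both} factors in $X^{0,1/2+}$, but your dual function $u_4$ lies only in $X^{-s,1/2-}$ --- and it must have exponent strictly below $1/2$, or else you lose the factor $\delta^\theta$ needed to close the contraction. Since $u_4$ sits in one of your two pairs in every dyadic term, the bilinear estimate can never be applied as stated; you would need an interpolated variant (between the bilinear gain at exponent $1/2+$ and the gainless $L^4\times L^4$ H\"older bound) with a reduced gain $(M_{\min}/M_{\max})^{\theta(k-1)/2}$, $\theta<1$. This is fixable and standard, but it is precisely the kind of detail a proof must supply, and it explains why the paper avoids bilinear estimates here entirely, reserving them for the almost conservation law where they are genuinely needed.
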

\begin{proof}
We recall the following estimates involving the $X^{\gamma,b}$ spaces which are proved in the Appendix. Let $\gamma \in \R$ and $\psi \in C^\infty_0(\R)$ be such that $\psi(t) =1$ for $t \in [-1,1]$.  One has
\begin{align}
\|\psi(t) e^{it\Lambda^k} u_0\|_{X^{\gamma,b}} &\lesssim \|u_0\|_{H^\gamma_x}, \label{homogeneous estimate} \\
\Big\|\psi_\delta(t)\int_0^t e^{i(t-s)\Lambda^k} F(s) ds\Big\|_{X^{\gamma,b}} &\lesssim \delta^{1-b-b'} \|F\|_{X^{\gamma,-b'}}, \label{inhomogeneous estimate}
\end{align}
where $\psi_\delta(t):=\psi(\delta^{-1}t)$ provided $0 <\delta \leq 1$ and
\begin{align}
0<b'<1/2<b, \quad b+b' <1. \label{inhomogeneous estimate condition} 
\end{align}
Note that the implicit constants are independent of $\delta$. This implies for $0<\delta \leq 1$ and $b,b'$ as in $(\ref{inhomogeneous estimate condition})$ that
\begin{align}
\|e^{it\Lambda^k} u_0\|_{X^{\gamma,b}_\delta} &\lesssim \|u_0\|_{H^\gamma_x}, \label{restrition homogeneous estimate}\\
\Big\|\int_0^t e^{i(t-s)\Lambda^k} F(s) ds \Big\|_{X^{\gamma,b}_\delta} &\lesssim \delta^{1-b-b'}\|F\|_{X^{\gamma,-b'}_\delta}. \label{restriction inhomogeneous estimate} 
\end{align}
By the Duhamel principle, we have
\[
\|I u\|_{X^{k/2,b}_\delta} = \Big\| e^{it\Lambda^k} I u_0 + \int_0^t e^{it\Lambda^k} I(|u|^2u)(s)ds \Big\|_{X^{k/2,b}_\delta} \lesssim \|Iu_0\|_{H^{k/2}_x} + \delta^{1-b-b'} \|I(|u|^2u)\|_{X^{k/2,-b'}_\delta}. 
\]
By the definition of restriction norm $(\ref{restriction Xsb norm})$, 
\[
\|Iu\|_{X^{k/2,b}_\delta} \lesssim \|I u_0\|_{H^{k/2}_x} + \delta^{1-b-b'} \|I(|w|^2 w)\|_{X^{k/2,-b'}},
\]
where $w$ agrees with $u$ on $[0,\delta]\times \R^k$ and 
\[
\|I u\|_{X^{k/2,b}_\delta} \sim \|I w\|_{X^{k/2,b}}.
\]
Let us assume for the moment that 
\begin{align}
\|I(|w|^2 w)\|_{X^{k/2,-b'}} \lesssim \|I w\|^3_{X^{k/2,b}}. \label{trilinear estimate}
\end{align}
This implies that
\[
\|I u\|_{X^{k/2,b}_\delta} \lesssim \|Iu_0\|_{H^{k/2}_x} + \delta^{1-b-b'} \|Iu\|^3_{X^{k/2,b}_\delta}.
\]
Note that 
\[
\|Iu_0\|_{H^{k/2}_x} \sim \|Iu_0\|_{\dot{H}^{k/2}_x} + \|Iu_0\|_{L^2_x} \leq 1 + \|u_0\|_{L^2_x}.
\]
As $\|Iu\|_{X^{k/2,b}_\delta}$ is continuous in the $\delta$ variable, the bootstrap argument (see e.g. \cite[Section 1.3]{Tao}) yields
\[
\|Iu\|_{X^{k/2,b}_\delta} \lesssim 1.
\]
This proves $(\ref{modified local well-posedness})$. It remains to show $(\ref{trilinear estimate})$. We will take the advantage of interpolation Lemma $\ref{lem interpolation}$. Note that the $I$-operator defined in $(\ref{I operator})$ is equal to $J^\alpha_N$ defined in $(\ref{J operator})$ with $\alpha=k/2-\gamma$. Thus, by Lemma $\ref{lem interpolation}$, $(\ref{trilinear estimate})$ is proved once there is $\alpha_0>0$ so that 
\[
\|J^\alpha_1(|w|^2w)\|_{X^{k/2,-b'}} \lesssim \|J^\alpha_1 w\|^3_{X^{k/2,b}}, 
\]
for all $0\leq \alpha \leq \alpha_0$. Splitting $w$ to low and high frequency parts $|\xi|\lesssim 1$ and $|\xi|\gg 1$ respectively and using definition of $J^\alpha_1$, it suffices to show
\begin{align}
\||w|^2 w\|_{X^{\gamma,-b'}} \lesssim \|w\|^3_{X^{\gamma,b}}, \label{interpolation application 1}
\end{align}
for all $\gamma \in [\gamma(k),k/2]$. By duality, a Leibniz rule, $(\ref{interpolation application 1})$ follows from
\begin{align}
\Big| \iint_{\R \times \R^k} (\scal{\Lambda}^\gamma w_1) \overline{w}_2 w_3 w_4 dt dx \Big| \lesssim \|w_1\|_{X^{\gamma,b}} \|w_2\|_{X^{\gamma,b}} \|w_3\|_{X^{\gamma,b}} \|w_4\|_{X^{0,b'}}. \label{duality}
\end{align}
Note that the last term should be precise as $\|w_4\|_{X^{0,b'}_{\tau=-|\xi|^k}}$ but it does not effect our estimate. Using H\"older's inequality, we can bound the left hand side of $(\ref{duality})$ as
\[
\text{LHS}(\ref{duality}) \leq \|\scal{\Lambda}^\gamma w_1 \|_{L^4_t L^4_x} \|w_2\|_{L^4_tL^4_x} \|w_3\|_{L^6_t L^6_x} \|w_4\|_{L^3_t L^3_x}.
\]
Since $(4,4)$ is an admissible pair, Corollary $\ref{coro linear Xsb}$ gives
\[
\|\scal{\Lambda}^\gamma w_1\|_{L^4_t L^4_x} \lesssim \|w_1\|_{X^{\gamma,b}}, \quad \|w_2\|_{L^4_tL^4_x}\lesssim \|w_2\|_{X^{0,b}} \leq \|w_2\|_{X^{\gamma,b}}.
\]
Similarly, Sobolev embedding and Corollary $\ref{coro linear Xsb}$ yield
\[
\|w_3\|_{L^6_t L^6_x} \lesssim \|\scal{\Lambda}^{k/6} w_3\|_{L^6_t L^3_x} \lesssim \|w_3\|_{X^{k/6},b} \leq \|w_3\|_{X^{\gamma,b}}.
\]
The last estimate comes from the fact that $\gamma>\gamma(k)>k/6$. Finally, we interpolate between $\|w_4\|_{L^2_tL^2_x}= \|w_4\|_{X^{0,0}}$ and $\|w_4\|_{L^4_t L^4_x} \lesssim \|w_4\|_{X^{0,1/2+}}$ to get
\[
\|w_4\|_{L^3_t L^3_x}\lesssim \|w_4\|_{X^{0,b'}}. 
\]
Combing these estimates, we have $(\ref{duality})$. The proof of Proposition $\ref{prop modified local well-posedness}$ is now complete.
\end{proof}
We are now able to prove the almost conservation law.
\paragraph{Proof of Proposition $\ref{prop almost conservation law}$.}
By the assumption $E(Iu_0)\leq 1$, Proposition $\ref{prop modified local well-posedness}$ shows that there exists $\delta=\delta(\|u_0\|_{L^2_x})$ such that the solution $u$ to $(\Ntext)$ satisfies $(\ref{modified local well-posedness})$. We firstly note that the usual energy satisfies
\begin{align*}
\frac{d}{dt} E(u(t)) &= \re \int_{\R^k} \overline{\partial_t u(t,x)} (|u(t,x)|^2 u(t,x) + \Lambda^k u(t,x)) dx\\
&= \re \int_{\R^k} \overline{\partial_t u(t,x)} (|u(t,x)|^2 u(t,x) + \Lambda^k u(t,x) + i \partial_t u(t,x)) dx =0.
\end{align*}
Similarly, we have
\begin{align*}
\frac{d}{dt} E(Iu(t)) &= \re \int_{\R^k} \overline{I \partial_t u(t,x)} (|Iu(t,x)|^2 Iu(t,x) + \Lambda^k Iu(t,x) + i \partial_t Iu(t,x)) dx \\
&= \re \int_{\R^k} \overline{I \partial_t u(t,x)} (|Iu(t,x)|^2 Iu(t,x) - I(|u(t,x)|^2u(t,x))) dx. 
\end{align*}
Here the second line follows by applying $I$ to both sides of $(\Ntext)$. Integrating in time and applying the Parseval formula, we obtain
\[
E(Iu(t))-E(Iu_0)= \re \int_0^\delta \int_{\sum_{j=1}^{4} \xi_j=0} \Big(1-\frac{m(\xi_2+\xi_3+\xi_4)}{m(\xi_2)m(\xi_3)m(\xi_4)} \Big) \widehat{\overline{I \partial_t u}} (\xi_1) \widehat{I u}(\xi_2) \widehat{\overline{I u}}(\xi_3) \widehat{I u}(\xi_4) dt.
\]
Here $\int_{\sum_{j=1}^4 \xi_j=0}$ denotes the integration with respect to the hyperplane's measure $\delta_0(\xi_1+...+\xi_4) d\xi_1...d\xi_4$. Using that $i I \partial_t u = - \Lambda^k I u - I(|u|^2u)$, we have
\[
|E(Iu(t))-E(Iu_0)| \leq \text{Term}_1 + \text{Term}_2,
\]
where 
\[
\text{Term}_1= \Big| \int_0^\delta \int_{\sum_{j=1}^{4}\xi_j=0} \mu(\xi_2,\xi_3,\xi_4) \widehat{\overline{\Lambda^k I u}}(\xi_1) \widehat{I u}(\xi_2) \widehat{\overline{I u}}(\xi_3) \widehat{I u}(\xi_4) dt \Big|,
\]
and 
\[
\text{Term}_2= \Big| \int_0^\delta \int_{\sum_{j=1}^{4}\xi_j=0} \mu(\xi_2,\xi_3,\xi_4) \widehat{\overline{I (|u|^2u)}}(\xi_1) \widehat{I u}(\xi_2) \widehat{\overline{I u}}(\xi_3) \widehat{I u}(\xi_4) dt \Big|,
\]
with 
\[
\mu(\xi_2,\xi_3,\xi_4):= 1-\frac{m(\xi_2+\xi_3+\xi_4)}{m(\xi_2)m(\xi_3)m(\xi_4)}.
\]
Our purpose is to prove
\begin{align*}
\text{Term}_1 +\text{Term}_2 \lesssim N^{-\gamma_0(k)+}. 
\end{align*}
\indent Let us consider the first term ($\text{Term}_1$). To do so, we decompose $u= \sum_{M\geq 1} P_M u=: \sum_{M\geq 1} u_M$ with the convention $P_1 u:=P_{\leq 1} u$ and write $\text{Term}_1$ as a sum over all dyadic pieces. By the symmetry of $\mu$ in $\xi_2, \xi_3, \xi_4$ and the fact that the bilinear estimate $(\ref{bilinear estimate})$ allows complex conjugations on either factors, we may assume that $M_2 \geq M_3 \geq M_4$. Thus,
\[
\text{Term}_1 \lesssim \sum_{M_1,M_2,M_3,M_4 \geq 1 \atop M_2 \geq M_3 \geq M_4} A(M_1, M_2, M_3,M_4),
\]
where 
\[
A(M_1,M_2, M_3, M_4):= \Big|\int_0^\delta \int_{\sum_{j=1}^4 \xi_j=0} \mu(\xi_2, \xi_3, \xi_4) \widehat{\overline{\Lambda^k I u_{M_1}}}(\xi_1) \widehat{I u_{M_2}}(\xi_2) \widehat{\overline{I u_{M_3}}}(\xi_3) \widehat{I u_{M_4}}(\xi_4) dt  \Big|.
\]
For simplifying the notation, we will drop the dependence of $M_1, M_2, M_3, M_4$ and write $A$ instead of $A(M_1, M_2, M_3, M_4)$. In order to have $\text{Term}_1 \lesssim N^{-\gamma_0(k)+}$, it suffices to prove
\begin{align}
A \lesssim N^{-\gamma_0(k)+} M_2^{0-}. \label{estimation of A}
\end{align}
To show $(\ref{estimation of A})$, we will break the frequency interactions into three cases due to the comparison of $N$ with $M_j$. It is worth to notice that $M_1 \lesssim M_2$ due to the fact that $\sum_{j=1}^4 \xi_j=0$. \newline
\textbf{Case 1.} $N \gg M_2$. In this case, we have $|\xi_2|, |\xi_3|, |\xi_4|\ll N$ and $|\xi_2+\xi_3+\xi_4| \leq N$, hence 
\[
m(\xi_2+\xi_3+\xi_4)=m(\xi_2)=m(\xi_3)=m(\xi_4)=1 \text{ and } \mu(\xi_2,\xi_3,\xi_4)=0.
\]
Thus $(\ref{estimation of A})$ holds trivially. \newline
\textbf{Case 2.} $M_2 \gtrsim N \gg M_3 \geq M_4$. Since $\sum_{j=1}^4 \xi_j=0$, we get $M_1 \sim M_2$. We also have from the mean value theorem that
\[
|\mu(\xi_2,\xi_3,\xi_4)| = \Big|1-\frac{m(\xi_2+\xi_3+\xi_4)}{m(\xi_2)} \Big| \lesssim \frac{|\nabla m(\xi_2)\cdot(\xi_3+\xi_4)|}{m(\xi_2)} \lesssim \frac{M_3}{M_2}.
\]
The pointwise bound, H\"older's inequality, Plancherel theorem and bilinear estimate $(\ref{bilinear estimate})$ yield
\begin{align}
A &\lesssim \frac{M_3}{M_2} \|\Lambda^k I u_{M_1} Iu_{M_3}\|_{L^2_t L^2_x} \|Iu_{M_2} Iu_{M_4}\|_{L^2_t L^2_x} \nonumber \\
& \lesssim \frac{M_3}{M_2} \Big(\frac{M_3}{M_1}\Big)^{(k-1)/2} \Big(\frac{M_4}{M_2}\Big)^{(k-1)/2} M_1^k \prod_{j=1}^{4}\|Iu_{M_j}\|_{X^{0,1/2+}} \nonumber \\
&\lesssim \frac{M_3}{M_2} \Big(\frac{M_3}{M_1}\Big)^{(k-1)/2} \Big(\frac{M_4}{M_2}\Big)^{(k-1)/2} \frac{M_1^{k/2}}{M_2^{k/2} \scal{M_3}^{k/2} \scal{M_4}^{k/2}} \prod_{j=1}^4 \|I u_{M_j}\|_{X^{k/2,1/2+}} \nonumber \\
&=\Big(\frac{M_3}{N}\Big)^{1/2} \Big(\frac{M_1}{M_2}\Big)^{1/2} \Big(\frac{N}{M_2}\Big)^{k-}N^{-(k-1/2)+} M_2^{0-}  \prod_{j=1}^4 \|I u_{M_j}\|_{X^{k/2,1/2+}} \nonumber \\
&\lesssim N^{-(k-1/2)+} M_2^{0-} \prod_{j=1}^4 \|I u_{M_j}\|_{X^{k/2,1/2+}}. \label{estimate case 2}
\end{align}
Using $(\ref{modified local well-posedness})$ and the fact that $\gamma_0(k)<k-1/2$ for $k\geq 3, k \in \Z$, we have $(\ref{estimation of A})$. \newline
\textbf{Case 3.} $M_2 \geq M_3 \gtrsim N$. In this case, we simply bound
\[
|\mu(\xi_2, \xi_3, \xi_4)| \lesssim \frac{m(\xi_1)}{m(\xi_2)m(\xi_3)m(\xi_4)}. 
\]  
Here we use that $m(\xi_1)\gtrsim m(\xi_2)$ and $m(\xi_3)\leq m(\xi_4) \leq 1$ due to the fact that $M_1 \lesssim M_2$ and $M_3 \geq M_4$. \newline
\textbf{Subcase 3a.} $M_2 \gg M_3 \gtrsim N$. We see that $M_1 \sim M_2$ since $\sum_{j=1}^4 \xi_j=0$. The pointwise bound, H\"older's inequality, Plancherel theorem and bilinear estimate $(\ref{bilinear estimate})$ again give
\begin{align*}
A&  \lesssim \frac{m(M_1)}{m(M_2)m(M_3)m(M_4)} \|\overline{\Lambda^k I u_{M_1}} I u_{M_4}\|_{L^2_tL^2_x} \|Iu_{M_2} \overline{I u_{M_3}}\|_{L^2_t L^2_x} \\
&\lesssim \frac{m(M_1)}{m(M_2)m(M_3)m(M_4)} \Big(\frac{M_4}{M_1}\Big)^{(k-1)/2} \Big(\frac{M_3}{M_2}\Big)^{(k-1)/2} \frac{M_1^{k/2}}{M_2^{k/2} M_3^{k/2} \scal{M_4}^{k/2}} \prod_{j=1}^4 \|Iu_{M_j}\|_{X^{k/2,1/2+}}.
\end{align*}
Thanks to $(\ref{modified local well-posedness})$, we only need to show
\begin{align}
\frac{m(M_1)}{m(M_2)m(M_3)m(M_4)} \Big(\frac{M_4}{M_1}\Big)^{(k-1)/2} \Big(\frac{M_3}{M_2}\Big)^{(k-1)/2} \frac{M_1^{k/2}}{M_2^{k/2} M_3^{k/2} \scal{M_4}^{k/2}} \lesssim N^{-\gamma_0(k)+} M_2^{0-}. \label{case 3a}
\end{align}
Remark that the function $m(\lambda)\lambda^\alpha$ is increasing, and $m(\lambda)\scal{\lambda}^\alpha$ is bounded below for any $\alpha +\gamma - k/2>0$ due to
\[
(m(\lambda)\lambda^\alpha)' = \left\{ \begin{array}{c l}
\alpha \lambda^{\alpha-1} & \text{if } 1 \leq \lambda \leq N, \\
N^{k/2-\gamma}(\alpha+\gamma-k/2) \lambda^{\alpha+\gamma-k/2-1} & \text{if } \lambda \geq 2N.
\end{array} \right.
\]  
We shall shortly choose an appropriate value of $\alpha$, says $\alpha(k)$, so that 
\begin{align}
m(M_4) \scal{M_4}^{\alpha(k)} \gtrsim 1, \quad m(M_3) M_3^{\alpha(k)} \gtrsim m(N)N^{\alpha(k)} =N^{\alpha(k)}. \label{property of m}
\end{align}
Using that $m(M_1)\sim m(M_2)$, we have
\begin{align*}
\text{LHS}(\ref{case 3a}) &\lesssim \frac{M_3^{\alpha(k)-1/2} \scal{M_4}^{\alpha(k)-1/2} M_1^{1/2}}{m(M_3)M_3^{\alpha(k)} m(M_4)\scal{M_4}^{\alpha(k)} M_2^{k-1/2}} \\
& \lesssim \frac{1}{N^{\alpha(k)} M_2^{k-2\alpha(k)}} \Big(\frac{M_3}{M_2}\Big)^{\alpha(k)-1/2} \Big(\frac{\scal{M_4}}{M_2}\Big)^{\alpha(k)-1/2} \Big(\frac{M_1}{M_2}\Big)^{1/2} \\
&\lesssim N^{-(k-\alpha(k))+} M_2^{0-}. 
\end{align*}
Therefore, if we choose $\alpha(k)$ so that $\gamma_0(k) = k-\alpha(k)$ or $\alpha(k)= k-\gamma_0(k)=\frac{k(2k-1)}{8k-2}$, then we get $(\ref{estimation of A})$. Note that $\alpha(k)+\gamma(k)-k/2 \geq 0$ for $k\geq 3, k \in \Z$, hence $(\ref{property of m})$ holds. \newline
\textbf{Subcase 3b.} $M_2 \sim M_3 \gtrsim N$. In this case, we see that $M_1 \lesssim M_2$. Arguing as in Subcase 3a, we obtain
\begin{align*}
A&  \lesssim \frac{m(M_1)}{m(M_2)m(M_3)m(M_4)} \|\overline{\Lambda^k I u_{M_1}} I u_{M_2}\|_{L^2_tL^2_x} \|Iu_{M_3} \overline{I u_{M_4}}\|_{L^2_t L^2_x} \\
&\lesssim \frac{m(M_1)}{m(M_2)m(M_3)m(M_4)} \Big(\frac{M_1}{M_2}\Big)^{(k-1)/2} \Big(\frac{M_4}{M_3}\Big)^{(k-1)/2} \frac{\scal{M_1}^{k/2}}{M_2^{k/2} M_3^{k/2} \scal{M_4}^{k/2}} \prod_{j=1}^4 \|Iu_{M_j}\|_{X^{k/2,1/2+}}.
\end{align*}
As in Subcase 3a, our aim is to prove
\begin{align}
\frac{m(M_1)}{m(M_2)m(M_3)m(M_4)} \Big(\frac{M_1}{M_2}\Big)^{(k-1)/2} \Big(\frac{M_4}{M_3}\Big)^{(k-1)/2} \frac{\scal{M_1}^{k/2}}{M_2^{k/2} M_3^{k/2} \scal{M_4}^{k/2}} \lesssim N^{-\gamma_0(k)+} M_2^{0-}. \label{case 3b}
\end{align}
We use $(\ref{property of m})$ to get
\begin{align*}
\text{LHS}(\ref{case 3b}) &\lesssim \frac{m(M_1)}{m(M_2)m(M_3)m(M_4) \scal{M_4}^{1/2} M_3^{k-1/2}} \\
& \lesssim \frac{m(M_1) M_2^{\alpha(k)}  \scal{M_4}^{\alpha(k)-1/2}}{ m(M_2) M_2^{\alpha(k)} m(M_3) M_3^{\alpha(k)} m(M_4)\scal{M_4}^{\alpha(k)} M_3^{k-\alpha(k)-1/2}} \\
& \lesssim \frac{1}{N^{2\alpha(k)}} \Big(\frac{M_2}{M_3} \Big)^{\alpha(k)} \Big(\frac{\scal{M_4}}{M_3}\Big)^{\alpha(k)-1/2} \frac{1}{M_3^{k-3\alpha(k)}}  \\
&\lesssim N^{-(k-\alpha(k))+} M_2^{0-}.
\end{align*}
Choosing $\alpha(k)$ as in Subcase 3a, we get $(\ref{estimation of A})$. \newline
\indent We now consider the second term ($\text{Term}_2$). We again decompose $u$ in dyadic frequencies, $u=\sum_{M\geq 1} u_M$. By the symmetry, we can assume that $M_2 \geq M_3 \geq M_4$. We can assume further that $M_2 \gtrsim N$ since $\mu(\xi_2, \xi_3, \xi_4)$ vanishes otherwise. Thus, 
\[
\text{Term}_2 \lesssim \sum_{M_1, M_2, M_3, M_4 \geq 1 \atop M_2 \geq M_3 \geq M_4} B(M_1, M_2, M_3, M_4),
\]
where
\[
B(M_1, M_2, M_3,M_4) := \Big| \int_0^\delta \int_{\sum_{j=1}^4 \xi_j=0} \mu(\xi_2, \xi_3, \xi_4) \widehat{\overline{P_{M_1} I (|u|^2u)}}(\xi_1) \widehat{I u_{M_2}}(\xi_2) \widehat{\overline{I u_{M_3}}}(\xi_3) \widehat{I u_{M_4}}(\xi_4) dt\Big|.
\]
As for the $\text{Term}_1$, we will use the notation $B$ instead of $B(M_1, M_2, M_3, M_4)$. Using the trivial bound 
\[
|\mu(\xi_2, \xi_3, \xi_4)| \lesssim \frac{m(M_1)}{m(M_2) m(M_3) m(M_4)},
\]
H\"older's inequality and Plancherel theorem, we bound 
\[
B\lesssim \frac{m(M_1)}{m(M_2) m(M_3) m(M_4)} \| P_{M_1} I(|u|^2u)\|_{L^2_t L^2_x} \|I u_{M_2}\|_{L^4_t L^4_x} \|I u_{M_3}\|_{L^4_t L^4_x} \|Iu_{M_4}\|_{L^\infty_t L^\infty_x}.
\]
\begin{lem}\label{lem term 2}
We have 
\begin{align}
\|P_{M_1} I (|u|^2 u)\|_{L^2_t L^2_x} &\lesssim \frac{1}{\scal{M_1}^{k/2}} \|Iu\|^3_{X^{k/2,1/2+}}, \label{estimate 1 term 2} \\
\|I u_{M_j}\|_{L^4_t L^4_x} & \lesssim \frac{1}{\scal{M_j}^{k/2}} \|I u_{M_j}\|_{X^{k/2,1/2+}}, \quad j=2, 3, \label{estimate 2 term 2}\\
\|I u_{M_4}\|_{L^\infty_t L^\infty_x} &\lesssim \|I u_{M_4}\|_{X^{k/2,1/2+}}. \label{estimate 3 term 2}
\end{align}
\end{lem}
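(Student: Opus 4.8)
The plan is to prove the three estimates of Lemma~\ref{lem term 2} separately, in each case reducing to the Strichartz and trilinear tools already established. For the pointwise-in-frequency estimates \eqref{estimate 2 term 2} and \eqref{estimate 3 term 2}, the key observation is that $Iu_{M_j}$ is supported on frequencies $|\xi|\sim M_j$, so I can freely trade a factor of $\scal{M_j}^{k/2}$ for the homogeneous derivative. Concretely, for \eqref{estimate 2 term 2} I would first write $\|Iu_{M_j}\|_{L^4_tL^4_x}\sim \scal{M_j}^{-k/2}\|\scal{\Lambda}^{k/2}Iu_{M_j}\|_{L^4_tL^4_x}$ using Bernstein's inequality from Lemma~\ref{lem bernstein inequality} on the Littlewood-Paley piece, and then apply Corollary~\ref{coro linear Xsb} (with the admissible pair $(4,4)$) to bound the right side by $\scal{M_j}^{-k/2}\|Iu_{M_j}\|_{X^{k/2,1/2+}}$. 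For \eqref{estimate 3 term 2} I would handle the $L^\infty_tL^\infty_x$ norm by a Sobolev embedding $L^\infty_x\hookleftarrow H^{k/2+}_x$ followed by Lemma~\ref{lem special property} (the linear part) to pass to the $X^{k/2+,1/2+}$ norm; since the relevant piece is a single frequency block, one can absorb the extra derivatives, arriving at the stated $X^{k/2,1/2+}$ control.

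The estimate \eqref{estimate 1 term 2} is the substantive one. Here I would proceed by duality: pairing $P_{M_1}I(|u|^2u)$ against a test function $g\in L^2_tL^2_x$ and extracting the factor $\scal{M_1}^{-k/2}$ amounts, after Plancherel, to establishing a trilinear bound of exactly the same shape as \eqref{trilinear estimate} from the proof of Proposition~\ref{prop modified local well-posedness}, namely $\|I(|u|^2u)\|_{X^{k/2,-b'}}\lesssim \|Iu\|^3_{X^{k/2,1/2+}}$. Indeed, since $P_{M_1}$ localizes to $|\xi_1|\sim M_1$, the operator $\scal{\Lambda}^{k/2}P_{M_1}$ is bounded on $L^2$ up to the constant $\scal{M_1}^{k/2}$, so $\|P_{M_1}I(|u|^2u)\|_{L^2_tL^2_x}\lesssim \scal{M_1}^{-k/2}\|I(|u|^2u)\|_{L^2_tL^2_x \text{ weighted by }\scal{\Lambda}^{k/2}}$, and the $X^{k/2,-b'}\hookrightarrow$ dual estimate then supplies the cube of the $X^{k/2,1/2+}$ norm. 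The cleanest route is simply to invoke the interpolation machinery of Lemma~\ref{lem interpolation} exactly as in the previous proof, reducing everything to the endpoint trilinear inequality \eqref{interpolation application 1}, which has already been verified via H\"older with the exponent splitting $(4,4,6,3)$ and Corollary~\ref{coro linear Xsb}.

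The main obstacle, and the step requiring the most care, is making the frequency localization $P_{M_1}$ interact correctly with the $I$-operator in \eqref{estimate 1 term 2}: one must verify that the gain $\scal{M_1}^{-k/2}$ is genuinely available even though $P_{M_1}$ sits \emph{outside} the nonlinearity while $I$ sits inside. The resolution is that on the output frequency $|\xi_1|\sim M_1$ the multiplier $\scal{\xi_1}^{k/2}$ is comparable to $\scal{M_1}^{k/2}$, so passing the weight through $P_{M_1}$ costs nothing, and the remaining $\scal{\Lambda}^{k/2}I(|u|^2u)$ is controlled by the $X^{k/2,-b'}$ trilinear estimate after summing the (at most logarithmically many, hence harmlessly absorbed into the $0-$ and $1/2+$ room) dyadic contributions. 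I expect \eqref{estimate 2 term 2} and \eqref{estimate 3 term 2} to be essentially immediate once Bernstein and the Strichartz corollary are in place, with all the real work concentrated in reducing \eqref{estimate 1 term 2} to the already-proven trilinear bound.
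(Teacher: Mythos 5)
Your treatments of \eqref{estimate 2 term 2} and \eqref{estimate 3 term 2} are essentially the paper's, with one caveat: the route to \eqref{estimate 3 term 2} via the embedding $H^{k/2+}_x\hookrightarrow L^\infty_x$ does not ``absorb'' the extra derivative --- on a block at frequency $M_4$ it costs a factor $\scal{M_4}^{0+}$, so it proves a strictly weaker statement than \eqref{estimate 3 term 2}. That loss happens to be harmless where the lemma is applied (there is a spare $M_2^{0-}$ and $M_4\leq M_2$), but to get the estimate as stated you should argue on the single block without wasting derivatives: by Bernstein (Lemma \ref{lem bernstein inequality}), $\|Iu_{M_4}\|_{L^\infty_x}\lesssim M_4^{k/2}\|Iu_{M_4}\|_{L^2_x}\sim\|\scal{\Lambda}^{k/2}Iu_{M_4}\|_{L^2_x}$, and then $\|\scal{\Lambda}^{k/2}Iu_{M_4}\|_{L^\infty_t L^2_x}\lesssim\|\scal{\Lambda}^{k/2}Iu_{M_4}\|_{X^{0,1/2+}}=\|Iu_{M_4}\|_{X^{k/2,1/2+}}$, which is exactly the paper's chain.

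The genuine gap is in \eqref{estimate 1 term 2}: your reduction to the already-proven trilinear estimate \eqref{trilinear estimate} fails because the norms sit on the wrong sides of an embedding. After extracting $\scal{M_1}^{-k/2}$, what must be controlled is $\|\scal{\Lambda}^{k/2}I(|u|^2u)\|_{L^2_tL^2_x}$, i.e.\ the $X^{k/2,0}$ norm of $I(|u|^2u)$. The estimate \eqref{trilinear estimate} (equivalently \eqref{interpolation application 1}) bounds the $X^{k/2,-b'}$ norm with $-b'<0$, and since $\scal{\tau-|\xi|^k}^{-b'}\leq\scal{\tau-|\xi|^k}^{0}$ one only has $\|F\|_{X^{k/2,-b'}}\leq\|F\|_{X^{k/2,0}}$; the inequality you need goes the other way, so \eqref{trilinear estimate} gives no information about the $L^2_tL^2_x$ quantity in \eqref{estimate 1 term 2}. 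The obstruction is visible at the level of exponents: the proof of \eqref{duality} puts the dual test function $w_4$ in $L^3_tL^3_x$, which is precisely where $b'>0$ is used (interpolation between $X^{0,0}$ and $X^{0,1/2+}$); at $b'=0$ the test function is merely $L^2_tL^2_x$, and H\"older then demands exactly the $L^2_tL^2_x$ bound on the trilinear term that you are trying to prove, so the reduction is circular. What is needed --- and what the paper proves directly --- is the genuinely stronger $b=0$ estimate: using that the multiplier of $\scal{\Lambda}^{k/2}I$ obeys a Leibniz rule, reduce to
\[
\|P_{M_1}\big((\scal{\Lambda}^{k/2}Iu_1)\,u_2\,u_3\big)\|_{L^2_tL^2_x}\lesssim \prod_{j=1}^3\|Iu_j\|_{X^{k/2,1/2+}},
\]
then apply H\"older with the splitting $L^4\times L^8\times L^8\to L^2$, bound the $L^4_tL^4_x$ factor by Strichartz \eqref{strichartz estimate} (the pair $(4,4)$ is admissible), and bound each $L^8_tL^8_x$ factor by the Sobolev embedding $\|u\|_{L^8_tL^8_x}\lesssim\|\scal{\Lambda}^{k/4}u\|_{L^8_tL^{8/3}_x}$ (the pair $(8,8/3)$ is admissible) followed by $\|u\|_{X^{k/4,1/2+}}\lesssim\|Iu\|_{X^{k/2,1/2+}}$, which is the multiplier bound behind \eqref{sobolev bound I operator}. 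Your proposal is missing this argument, and no appeal to \eqref{trilinear estimate}, to the interpolation Lemma \ref{lem interpolation}, or to the $(4,4,6,3)$ H\"older splitting can substitute for it.
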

\begin{proof}
The estimate $(\ref{estimate 1 term 2})$ is in turn equivalent to
\[
\|\scal{\Lambda}^{k/2} P_{M_1} I(|u|^2 u)\|_{L^2_t L^2_x} \lesssim \|Iu\|^3_{X^{k/2,1/2+}}.
\]
Since $\scal{\Lambda}^{k/2} I$ obeys a Leibniz rule, it suffices to prove
\begin{align}
\|P_{M_1}((\scal{\Lambda}^{k/2} I u_1) u_2 u_3)\|_{L^2_t L^2_x} \lesssim \prod_{j=1}^3 \|I u_j\|_{X^{k/2,1/2+}}. \label{estimate 1 term 2 proof}
\end{align}
The Littlewood-Paley theorem and H\"older's inequality imply
\[
\text{LHS}(\ref{estimate 1 term 2 proof}) \lesssim \|\scal{\Lambda}^{k/2} I u_1\|_{L^4_t L^4_x} \|u_2\|_{L^8_t L^8_x} \|u_3\|_{L^8_t L^8_x}.
\]
We have from Strichartz estimate $(\ref{strichartz estimate})$ that
\[
\|\scal{\Lambda}^{k/2} I u_1\|_{L^4_t L^4_x} \lesssim \|\scal{\Lambda}^{k/2} I u_1\|_{X^{0,1/2+}} = \|Iu_1\|_{X^{k/2,1/2+}}.
\]
Combining Sobolev embedding and Strichartz estimate $(\ref{strichartz estimate})$ yield
\[
\|u_2\|_{L^8_t L^8_x}\lesssim \|\scal{\Lambda}^{k/4} u_2\|_{L^8_t L^{8/3}_x} \lesssim \|\scal{\Lambda}^{k/4} u_2\|_{X^{0,1/2+}} \lesssim \|Iu_2\|_{X^{k/2,1/2+}},
\]
where the last estimate follows from $(\ref{sobolev bound I operator})$. Similarly for $\|u_3\|_{L^8_t L^8_x}$. This shows $(\ref{estimate 1 term 2 proof})$. The estimate $(\ref{estimate 2 term 2})$ follows easily from Strichartz estimate. For $(\ref{estimate 3 term 2})$, we use Sobolev embedding and Strichartz estimate to get
\[
\|I u_{M_4}\|_{L^\infty_t L^\infty_x} \lesssim \|\scal{\Lambda}^{k/2} I u_{M_4}\|_{L^\infty_t L^2_x} \lesssim \|\scal{\Lambda}^{k/2} Iu_{M_4}\|_{X^{0,1/2+}} = \|I u_{M_4}\|_{X^{k/2,1/2+}}. 
\]
The proof is complete.
\end{proof}
We use Lemma $\ref{lem term 2}$ to bound 
\[
B\lesssim \frac{m(M_1)}{m(M_2) m(M_3) m(M_4)}\frac{1}{\scal{M_1}^{k/2} \scal{M_2}^{k/2} \scal{M_3}^{k/2}} \|I u\|_{X^{k/2,1/2+}} \prod_{j=2}^4 \|I u_{M_j}\|_{X^{k/2,1/2+}},
\]
with $M_2 \geq M_3 \geq M_4$ and $M_2 \gtrsim N$. Using $(\ref{modified local well-posedness})$, the estimate $(\ref{estimation of A})$ follows once we have
\begin{align}
\frac{m(M_1)}{m(M_2) m(M_3) m(M_4)}\frac{1}{\scal{M_1}^{k/2} \scal{M_2}^{k/2} \scal{M_3}^{k/2}}  \lesssim N^{-\gamma_0(k)+} M_2^{0-}. \label{term 2}
\end{align}
We now break the frequency interactions into two cases: $M_2 \sim M_3$ and $M_2 \sim M_1$ since $\sum_{j=1}^4 \xi_j=0$. \newline
\textbf{Case 1.} $M_2 \sim M_3, M_2 \geq M_3 \geq M_4$ and $M_2 \gtrsim N$.  
We see that
\begin{align*}
\text{LHS}(\ref{term 2}) &\sim \frac{m(M_1)}{(m(M_2))^2 m(M_4)} \frac{1}{\scal{M_1}^{k/2} \scal{M_2}^k} \lesssim \frac{m(M_1)}{N^{2\alpha(k)} m(M_4) \scal{M_1}^{k/2} \scal{M_2}^{k-2\alpha(k)}} \\
&\lesssim \frac{1}{N^{2\alpha(k)}} \frac{1}{m(M_4) \scal{M_2}^{k-2\alpha(k)}} \lesssim \frac{1}{N^{2\alpha(k)}} \frac{1}{M_2^{k-3\alpha(k)}} \lesssim N^{-(k-\alpha(k))+} M_2^{0-}.
\end{align*}
Here we use that $m(M_2)\scal{M_2}^{\alpha(k)} \geq m(N)N^{\alpha(k)}=N^{\alpha(k)}, m(M_1)\lesssim \scal{M_1}^{k/2}$ and that $m(y)\scal{x}^{\alpha(k)} \gtrsim 1$ for all $1 \leq y \leq x$. \newline
\textbf{Case 2.} $M_2 \sim M_1$, $M_2 \geq M_3 \geq M_4$ and $M_2 \gtrsim N$. We have
\begin{align*}
\text{LHS}(\ref{term 2}) &\lesssim \frac{1}{m(M_3) m(M_4)} \frac{1}{\scal{M_2}^{k} \scal{M_3}^{k/2}} \\
&\lesssim \frac{1}{m(M_3)\scal{M_3}^{\alpha(k)}} \frac{1}{m(M_4)\scal{M_2}^{\alpha(k)}} \frac{1}{\scal{M_2}^{k-\alpha(k)} \scal{M_3}^{k/2-\alpha(k)}} \\
&\lesssim N^{-(k-\alpha(k))+} M_2^{0-}.
\end{align*}
Here we use again $m(M_3)\scal{M_3}^{\alpha(k)}, m(M_4)\scal{M_2}^{\alpha(k)}\gtrsim 1$. By choosing $\alpha(k)$ as in Subcase 3a, we prove $(\ref{term 2})$. The proof of Proposition $\ref{prop almost conservation law}$ is now complete.
\defendproof
\begin{rem} \label{rem comment on gamma}
Let us now comment on the choices of $\alpha(k)$ and $\gamma_0(k)$. As mentioned in Remark $\ref{rem almost conservation law}$, if the increment of the modified energy is $N^{-\gamma_0(k)}$, then we can show (see Section $\ref{section global proof}$, after $(\ref{choice of N})$) that the global well-posedness holds for data in $H^\gamma(\R^k)$ with $\gamma>\frac{k^2}{2(k+\gamma_0(k))}=:\gamma(k)$. We learn from $(\ref{estimate case 2})$ that $\gamma_0(k)\leq k-1/2$, hence $\gamma(k) \geq \frac{k^2}{4k-1}$. On the other hand, in Subcase 3a, we need $\alpha(k) +\gamma-k/2 > 0$ and $\alpha(k)=k-\gamma_0(k)$. Since $\gamma>\gamma(k)$, we have $\alpha(k)+\gamma-k/2> \alpha(k)+\gamma(k)- k/2 \geq \alpha(k) + \frac{k^2}{4k-1} -\frac{k}{2}$. We thus choose $\alpha(k):=\frac{k}{2}-\frac{k^2}{4k-1}= \frac{k(2k-1)}{8k-2}$, hence $\gamma_0(k)=k-\alpha(k)= \frac{k(6k-1)}{8k-2}$.
\end{rem}  
\section{The proof of Theorem $\ref{theorem global existence}$} \label{section global proof}
We now are able to show the global existence given in Theorem $\ref{theorem global existence}$. We only consider positive time, the negative one is treated similarly. The conservation of mass and Lemma $\ref{lem property I operator}$ give
\begin{align}
\|u(t)\|^2_{H^\gamma_x} \lesssim \|Iu(t)\|^2_{H^{k/2}_x} \sim \|Iu(t)\|^2_{\dot{H}^{k/2}_x} + \|Iu(t)\|^2_{L^2_x} \lesssim E(Iu(t))+\|u_0\|^2_{L^2_x}. \label{estimate global proof 1}
\end{align}
By density argument, we may assume that $u_0\in C^\infty_0(\R^k)$. Let $u$ be a global solution to $(\Ntext)$ with initial data $u_0$. As $E(Iu_0)$ is not necessarily small, we will use the scaling $(\ref{scaling})$ to make the energy of rescaled initial data small in order to apply the almost conservation law given in Proposition $\ref{prop almost conservation law}$. Let $\lambda>0$ and $u_\lambda$  be as in $(\ref{scaling})$. We have
\begin{align}
E(Iu_\lambda(0)) = \frac{1}{2}\|Iu_\lambda(0)\|^2_{\dot{H}^{k/2}_x} +\frac{1}{4}\|Iu_\lambda(0)\|^4_{L^4_x}. \label{modified energy scaled initial data}
\end{align}
We then estimate
\[
\|Iu_\lambda(0)\|^2_{\dot{H}^{k/2}_x} \lesssim N^{2(k/2-\gamma)} \|u_\lambda(0)\|^2_{\dot{H}^\gamma_x} = N^{2(k/2-\gamma)} \lambda^{-2\gamma}\|u_0\|^2_{\dot{H}^\gamma_x},
\]
and
\[
\|Iu_\lambda(0)\|^4_{L^4_x}\lesssim \|u_\lambda(0)\|^4_{L^4_x}= \lambda^{-k}\|u_0\|^4_{L^4_x} \lesssim \lambda^{-k}\|u_0\|^4_{H^\gamma_x}.
\]
Note that $\gamma>\gamma(k)\geq k/4$ allows us to use Sobolev embedding in the last inequality. Thus, $(\ref{modified energy scaled initial data})$ gives for $\lambda \gg 1$,
\[
E(Iu_\lambda(0))\lesssim (N^{2(k/2-\gamma)} \lambda^{-2\gamma} + \lambda^{-k}) (1+\|u_0\|_{H^\gamma_x})^4 \leq C_0 N^{2(k/2-\gamma)} \lambda^{-2\gamma}  (1+\|u_0\|_{H^\gamma_x})^4.
\] 
We now choose 
\begin{align}
\lambda:=N^{\frac{k/2-\gamma}{\gamma}} \Big(\frac{1}{2C_0}\Big)^{-\frac{1}{2\gamma}} (1+\|u_0\|_{H^\gamma_x})^{\frac{2}{\gamma}} \label{choosing lambda}
\end{align}
so that $E(Iu_\lambda(0))\leq 1/2$. We then apply Proposition $\ref{prop almost conservation law}$ for $u_\lambda(0)$. Note that we may reapply this proposition until $E(Iu_\lambda(t))$ reaches 1, that is at least $C_1 N^{\gamma_0(k)-}$ times. Therefore, 
\begin{align}
E(Iu_\lambda(C_1 N^{\gamma_0(k)-}\delta)) \sim 1. \label{scaling modified energy}
\end{align}
Now given any $T \gg 1$, we choose $N\gg 1$ so that
\[
T \sim \frac{N^{\gamma_0(k)-}}{\lambda^k} C_1 \delta.
\]
Using $(\ref{choosing lambda})$, we see that 
\begin{align}
T \sim N^{\frac{2(\gamma_0(k)+k)\gamma -k^2}{2\gamma}-}. \label{choice of N}
\end{align}
Here $\gamma>\gamma(k)=\frac{k^2}{2(\gamma_0(k)+k)}$, hence the power of $N$ is positive and the choice of $N$ makes sense for arbitrary $T\gg 1$. Next, using $(\ref{scaling})$, a direct computation shows
\[
E(Iu(t)) = \lambda^k E(I u_\lambda(\lambda^k t)).
\]
Thus, we have from $(\ref{choosing lambda})$, $(\ref{scaling modified energy})$ and $(\ref{choice of N})$ that
\begin{align*}
E(Iu(T)) &= \lambda^k E(Iu_\lambda(\lambda^k T)) = \lambda^k E(I u_\lambda(C_1 N^{\gamma_0(k)-} \delta)) \\
&\sim \lambda^k \leq N^{\frac{k(k/2-\gamma)}{\gamma}} \sim T^{\frac{k(k-2\gamma)}{2(\gamma_0(k)+k)\gamma-k^2}+}.
\end{align*}
This shows that there exists $C_2=C_2(\|u_0\|_{H^\gamma_x},\delta)$ such that
\[
E(Iu(T)) \leq C_2 T^{\frac{k(k-2\gamma)}{2(\gamma_0(k)+k)\gamma-k^2}+},
\]
for $T \gg 1$. This together with $(\ref{estimate global proof 1})$ show that
\[
\|u(T)\|_{H^\gamma_x} \lesssim  C_3 T^{\frac{k(k-2\gamma)}{2(2(\gamma_0(k)+k)\gamma-k^2)}+} + C_4,
\]
where $C_3, C_4$ depend only on $\|u_0\|_{H^\gamma_x}$. The proof of Theorem $\ref{theorem global existence}$ is complete.
\appendix
\section{Linear estimate in $X^{\gamma,b}$ spaces}
In this section, we will give the proof of linear estimates $(\ref{homogeneous estimate})$ and $(\ref{inhomogeneous estimate})$ which is essentially given in \cite{Ginibre}. The estimate $(\ref{homogeneous estimate})$ follows from the fact that 
\begin{align}
\|u\|_{X^{\gamma,b}} = \|e^{-it\Lambda^k} u\|_{H^b_t H^\gamma_x}. \label{equivalent norm}
\end{align} 
Indeed, we have
\[
\| \psi(t) e^{it\Lambda^k} u_0\|_{X^{\gamma,b}} = \|e^{-it\Lambda^k} \psi(t) e^{it\Lambda^k} u_0\|_{H^b_t H^\gamma_x} = \|\psi\|_{H^b_t} \|u_0\|_{H^\gamma_x} \lesssim \|u_0\|_{H^\gamma_x}.
\]
For $(\ref{inhomogeneous estimate})$, we firstly remark that it is a consequence of the following estimate
\begin{align}
\Big\| \psi_\delta(t) \int_0^t g(s) ds \Big\|_{H^b_t} \lesssim \delta^{1-b-b'} \|g\|_{H^{-b'}_t}. \label{inhomogeneous estimate appendix}
\end{align}
In fact, using $(\ref{equivalent norm})$, it suffices to prove
\begin{align}
\Big\|\psi_\delta(t) \int_0^t G(s) ds \Big\|_{H^b_t H^\gamma_x} \lesssim \|G\|_{H^{-b'}_t H^\gamma_x}. \label{equivalent norm application}
\end{align}
We now apply $(\ref{inhomogeneous estimate appendix})$ for $g(s)=\widehat{G}(s,\xi)$ with $\xi$ fixed to have
\begin{align}
\Big\|\psi_\delta(t) \int_0^t \widehat{G}(s,\xi) ds \Big\|_{H^b_t} \lesssim \delta^{1-b-b'} \|\widehat{G}(t,\xi)\|_{H^{-b'}_t}, \label{appendix 1}
\end{align}
where $\widehat{\cdot}$ is the spatial Fourier transform. If we denote 
\[
H(t,x):= \psi_\delta(t) \int_0^t G(s,x) ds,
\]
then $(\ref{appendix 1})$ becomes 
\[
\|\widehat{H}(t,\xi) \|_{H^b_t} \lesssim \delta^{1-b-b'} \|\widehat{G}(t,\xi)\|_{H^{-b'}_t}.
\]
Squaring the above estimate, multiplying both sides with $\scal{\xi}^{2\gamma}$ and integrating over $\R^k$, we obtain $(\ref{equivalent norm application})$. It remains to prove $(\ref{inhomogeneous estimate appendix})$. To do so, we write
\begin{align*}
\psi_\delta(t) \int_0^t g(s) ds & = \psi_\delta(t) \int_\R \Big(\int_0^t e^{i\tau s} ds \Big) \hat{g}(\tau)d\tau = \psi_\delta(t) \int_\R \frac{e^{it\tau}-1}{i\tau} \hat{g}(\tau) d\tau \\
&= \psi_\delta(t) \sum_{k\geq 1} \frac{t^k}{k!} \int_{|\delta \tau|\leq 1} (i\tau)^{k-1} \hat{g}(\tau) d\tau - \psi_\delta(t) \int_{|\delta\tau|\geq 1} (i\tau)^{-1} \hat{g}(\tau) d\tau \\
& +\psi_\delta(t) \int_{|\delta\tau|\geq 1} (i\tau)^{-1} e^{it\tau} \hat{g}(\tau)d\tau =: I + II+III.
\end{align*}
Let us consider the first term. The Cauchy-Schwarz inequality gives
\[
\|I\|_{H^b_t} \leq \sum_{k\geq 1} \frac{1}{k!} \|t^k \psi_\delta\|_{H^b_t} \delta^{1-k} \|g\|_{H^{-b'}_t} \Big(\int_{|\delta \tau| \leq 1} \scal{\tau}^{2b'} d\tau \Big)^{1/2}.
\]
Using that $t^k \psi_\delta(t) =\delta^k \varphi_k(\delta^{-1}t)$ where $\varphi_k(t) = t^k \psi(t)$, we have
\[
\|t^k\psi_\delta\|_{H^b_t} = \delta^k\|\varphi_k(\delta^{-1}t)\|_{H^b_t} = \delta^k \Big( \int_\R \scal{\tau}^{2b} \delta^2|\hat{\varphi}_k(\delta\tau)|^2 d\tau \Big)^{1/2} \lesssim \delta^k \delta^{1/2-b}\|\varphi_k\|_{H^b_t}.
\]  
We also have 
\[
\int_{|\delta\tau|\leq 1} \scal{\tau}^{2b'} d\tau = \int_{|\tau|\leq 1} \scal{\delta^{-1}\tau}^{2b'} \delta^{-1} d\tau \lesssim \delta^{-1-2b'},
\]
since $b'<1/2$. This implies
\[
\|I\|_{H^b_t} \lesssim \sum_{k\geq 1} \frac{1}{k!} \delta^k \delta^{1/2-b} \delta^{1-k} \|g\|_{H^{-b'}_t} \delta^{-1/2-b'} \lesssim \delta^{1-b-b'} \|g\|_{H^{-b'}_t}.
\]
Similarly, we have
\[
\|II\|_{H^b_t} \lesssim \|\psi_\delta\|_{H^b_t} \|g\|_{H^{-b'}_t} \Big( \int_{|\delta\tau|\geq 1} |\tau|^{-2} \scal{\tau}^{2b'} d\tau \Big)^{1/2} \lesssim \delta^{1-b-b'}\|g\|_{H^{-b'}_t},
\]
by using that $\|\psi_\delta\|_{H^b_t} \lesssim \delta^{1/2-b} \|\psi\|_{H^b_t} \lesssim \delta^{1/2-b}$ and
\[
\int_{|\delta\tau|\geq 1} |\tau|^{-2} \scal{\tau}^{2b'}d\tau = \int_{|\tau|\geq 1} |\delta^{-1} \tau|^{-2} \scal{\delta^{-1}\tau}^{2b'} \delta^{-1}d\tau \leq \delta^{1-2b'} \int_{|\tau|\geq 1} |\tau|^{-2} \scal{\tau}^{2b'}d\tau \lesssim \delta^{1-2b'}.
\]
Here $b'<1/2$ hence $2(1-b')>1$ implies the last integral is convergent. We finally treat the third term as follows. Set 
\[
J(t):= \int_{|\delta\tau|\geq 1} (i\tau)^{-1} \hat{g}(\tau) e^{it\tau} d\tau.
\]
We see that
\[
\hat{J}(\zeta)= \int_{|\delta\tau|\geq 1} (i\tau)^{-1} \hat{g}(\tau) \delta_0(\zeta-\tau) d\tau,
\]
where $\delta_0$ is the Dirac delta function. This yields that
\begin{align*}
\|J\|_{H^b_t} = \Big(\int \scal{\zeta}^{2b} |\hat{J}(\zeta)|^2 d\zeta\Big)^{1/2} &= \Big(\int_{|\delta\tau|\geq 1} \scal{\tau}^{2b} |\tau|^{-2} |\hat{g}(\tau)|^2 d\tau\Big)^{1/2} \\
&\leq \|g\|_{H^{-b'}_t} \sup_{|\delta\tau|\geq 1} |\tau|^{-1} \scal{\tau}^{b+b'} \lesssim \delta^{1-b-b'}\|g\|_{H^{-b'}_t}.
\end{align*}
Similarly,
\[
\|J\|_{L^2_t} \lesssim \delta^{1-b'} \|g\|_{H^{-b'}_t}.
\]
Thus, the Young's inequality gives
\[
\|III\|_{H^b_t} = \|\scal{\tau}^b (\hat{\psi}_\delta \star \hat{J})\|_{L^2_\tau} \lesssim \| |\tau|^b \hat{\psi}_\delta\|_{L^1_\tau} \|\hat{J}\|_{L^2_\tau}+\|\hat{\psi}_\delta\|_{L^1_\tau} \|\scal{\tau}^b \hat{J}\|_{L^2_\tau} \lesssim \delta^{1-b-b'} \|g\|_{H^{-b'}_t}.
\]
Here we use the fact that $\scal{\tau}^b \lesssim |\tau-\zeta|^b + \scal{\zeta}^b$ to have the first estimate.  
This completes the proof.
\section*{Acknowledgments}
\addcontentsline{toc}{section}{Acknowledments}
The author would like to express his deep thanks to his wife-Uyen Cong for her encouragement and support. He also would like to thank his supervisor Prof. Jean-Marc BOUCLET for the kind guidance and constant encouragement. He also would like to thank the reviewers for their helpful comments and suggestions, which helped improve the manuscript.


{\sc Institut de Math\'ematiques de Toulouse, Universit\'e Toulouse III Paul Sabatier, 31062 Toulouse Cedex 9, France.} \\
\indent Email: \href{mailto:dinhvan.duong@math.univ-toulouse.fr}{dinhvan.duong@math.univ-toulouse.fr}
\end{document}